\definecolor{leaf}{rgb}{0,.35,0}
\definecolor{chianti}{rgb}{0.6,0,0}
\definecolor{meretale}{rgb}{0,0,.6}
\theoremstyle{definition}
\newtheorem{theorem}{Theorem}[section]
\newtheorem{corollary}[theorem]{Corollary}
\newtheorem{lemma}[theorem]{Lemma}
\newtheorem{proposition}[theorem]{Proposition}
\theoremstyle{definition}
\newtheorem{definition}[theorem]{Definition}
\newtheorem{example}[theorem]{Example}
\newtheorem{remark}[theorem]{Remark}
\numberwithin{equation}{subsection}
\newtheorem{theoremx}{Theorem}
\newcommand{\m}{\mathfrak{m}}
\newcommand{\NN}{\mathbb{N}}
\newcommand{\ZZ}{\mathbb{Z}}
\newcommand{\FF}{\mathbb{F}}
\newcommand{\Hom}{\operatorname{Hom}}
\renewcommand{\a}{\mathfrak{a}}
\newcommand{\n}{\mathfrak{n}}
\newcommand{\lr}[1]{{\langle {#1} \rangle}}
\newcommand{\dif}[2]{^{\lr{#1}_{#2}}}
\newcommand{\cidr}[1]{^{\left\{{#1}\right\}}}
\newcommand{\derp}[1]{^{\lr{#1}_{\delta}}}
\newcommand{\difM}[1]{^{\lr{#1}_{\mathrm{mix}}}}
\newcommand{\difMV}[1]{^{\lr{#1}_{{\mathrm{mix}},V}}}
\renewcommand{\geq}{\geqslant}
\renewcommand{\leq}{\leqslant}
\author[De Stefani]{Alessandro De Stefani}
\address{Dipartimento di Matematica, Universit{\`a} di Genova, Via Dodecaneso 35, 16146 Genova,~Italy}
\email{alessandro.destefani@unige.it}
\author[Grifo]{Elo\'isa Grifo}
\address{Department of Mathematics, University of Nebraska, Lincoln, NE 68588-0130, USA}
\email{grifo@unl.edu}
\author[Jeffries]{Jack Jeffries}
\address{Department of Mathematics, University of Nebraska, Lincoln, NE 68588-0130, USA}
\email{jack.jeffries@unl.edu}
\begin{document}
\newcommand{\tens}{\otimes}
\newcommand{\hhtest}[1]{\tau ( #1 )}
\renewcommand{\hom}[3]{\operatorname{Hom}_{#1} ( #2, #3 )}
\renewcommand{\t}{\operatorname{type}}

\title[Differential and symbolic powers of ideals]{Differential and symbolic powers of ideals}

\begin{abstract}
We characterize symbolic powers of prime ideals in polynomial rings over any field in terms of $\ZZ$-linear differential operators, and of prime ideals in polynomial rings over complete discrete valuation rings with a $p$-derivation $\delta$ in terms of $\ZZ$-linear differential operators and of $\delta$. This extends previous work of the same authors, as it allows the removal of separability hypotheses that were otherwise necessary. The absence of separability and the fact that modules of $\ZZ$-linear differential operators are typically not finitely generated require the introduction of new techniques. As a byproduct, we extend a characterization of symbolic powers due to Cid-Ruiz which also holds in the nonsmooth case. Finally, we produce an example of an unramified discrete valuation ring that has no $p$-derivations.
\end{abstract}

\maketitle

\section{Introduction}
Given a prime ideal $Q$ in a noetherian ring $R$, its powers $Q^n$ might have embedded primes. The $n$th symbolic power of $Q$ is the minimal component of $Q^n$, obtained by deleting all the embedded components:
\[ Q^{(n)} = Q^n R_Q \cap Q. \]
Zariski \cite{Zariski} and Nagata \cite{Nagata} showed that when $R = K[x_1, \ldots, x_d]$ is a polynomial ring over a field $K$, then
\[ Q^{(n)} = \! \! \! \! \! \! \! \bigcap_{\substack{\m \supseteq Q \\ \m \text{ maximal ideal}}} \! \! \! \! \! \! \! \m^n.\]
When the field is perfect, it is well-known that this can be rewritten in terms of differential operators \cite[Proposition 2.14]{SurveySP}: denoting the $K$-linear differential operators on $R$ of order up to $n$ by $D_{R|K}^n$ (see \Cref{diff ops def}), 
\[ Q^{(n)} = \{ f \in R \mid \partial(f) \in Q \text{ for all } \partial \in D^{n-1}_{R|K} \}.\]
For a general noetherian ring $R$, the set
\[ Q\dif{n}{K} \colonequals \{ f \in R \mid \partial(f) \in Q \text{ for all } \partial \in D^{n-1}_{R|K} \} \]
is a $Q$-primary ideal known as the $n$th differential power of $Q$, which always contains the $n$th symbolic power~$Q^{(n)}$. The differential version of Zariski-Nagata says that the symbolic and ordinary powers of $Q$ coincide when $R$ is a polynomial ring over a perfect field.

The assumption that $K$ is perfect is necessary. For example, when $K=\mathbb{F}_p(t)$ and $R = K[x]$, and $Q = (x^p-t)$, we have $Q^{(n)} = Q^n$ for all $n$, while $D^1_{R|K} = R \oplus R \cdot \frac{\partial}{\partial x}$ satisfies
\[ D^1_{R|K} \left( x^p-t \right) \in Q, \]
so
\[ x^p-t \in Q\dif{2}{K} \smallsetminus Q^{(2)}.\]
The issue is we need \emph{more} differential operators to be able to see that $x^p-t \notin Q^{(2)}$. In this paper, we show that the differential version of Zariski-Nagata holds in full generality if we consider all $\ZZ$-linear differential operators.

\begin{theoremx}[see \Cref{thm general field}]
Let $K$ be any field and $R = K[x_1, \ldots, x_d]$. For any prime ideal $Q$ in $R$ and any $n \geqslant 1$,
\[ Q^{(n)} = \{ f \in R \mid \partial(f) \in Q \text{ for all } \partial \in D^{n-1}_{R|\ZZ} \}.\]
\end{theoremx}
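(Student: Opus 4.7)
The standard inclusion $Q^{(n)} \subseteq Q\dif{n}{\ZZ}$ holds in full generality: if $f \in Q^{(n)}$ and $\partial \in D^{n-1}_{R|\ZZ}$, an iterated Leibniz argument gives $\partial(Q^n R_Q) \subseteq Q R_Q$, which contracts to $Q$ in $R$. For the reverse inclusion, I would invoke the classical Zariski--Nagata theorem $Q^{(n)} = \bigcap_{\mathfrak{m} \supseteq Q} \mathfrak{m}^n$ (over maximal $\mathfrak{m}$). Since $Q \subseteq \mathfrak{m}$ trivially forces $Q\dif{n}{\ZZ} \subseteq \mathfrak{m}\dif{n}{\ZZ}$, the entire theorem reduces to proving $\mathfrak{m}\dif{n}{\ZZ} = \mathfrak{m}^n$ for every maximal ideal $\mathfrak{m}$ of $R$.

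Fix such a maximal $\mathfrak{m}$ with residue field $L = R/\mathfrak{m}$ (a finite extension of $K$ by the Nullstellensatz), and argue by contrapositive: given $f \in R$ with $f \notin \mathfrak{m}^n$, I aim to exhibit a $\partial \in D^{n-1}_{R|\ZZ}$ with $\partial(f) \notin \mathfrak{m}$. The plan is to pass to the completion: Cohen's structure theorem gives $\widehat{R_{\mathfrak{m}}} \cong L\ps{y_1, \dots, y_d}$, and the Taylor expansion $\hat{f} = \sum_{\alpha} c_\alpha y^\alpha$ has some nonzero coefficient $c_\alpha \in L$ of multi-index $|\alpha| < n$. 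On the completion, the $L$-linear Hasse--Schmidt operator $\partial_y^{[\alpha]}$ extracts $c_\alpha$ modulo $(y_1, \dots, y_d)$. The task becomes realizing this extraction --- at least modulo $\mathfrak{m}$ --- by some $\partial \in D^{|\alpha|}_{R|\ZZ} \subseteq D^{n-1}_{R|\ZZ}$ defined on $R$ itself.

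For the construction, I would exploit two explicit subfamilies of $D_{R|\ZZ}$. The first consists of the Hasse--Schmidt derivations $\partial_{x_i}^{[k]}$ in the polynomial variables, which lie in $D^k_{R|\ZZ}$ and remain nontrivial in positive characteristic (unlike the iterates $\partial_{x_i}^k$, which vanish for $k \geq p$). The second consists of the $\ZZ$-linear differential operators on $K$ itself, extended coefficient-wise to $R$. After a change of variables relating the polynomial coordinates $x_i$ to a regular system of parameters of $\mathfrak{m}$, composing operators from these two families should produce a single $\partial$ of order $|\alpha| \leq n-1$ sending $f$ to an element whose residue in $L$ is, up to a unit, the target coefficient $c_\alpha$. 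The bookkeeping is cleanest when phrased through the module of principal parts $P^{n-1}_{R/\ZZ}$, which should also yield the extended Cid-Ruiz characterization advertised in the abstract.

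The main obstacle is the inseparable case: when $L/K$ is not separable, no coefficient field of $\widehat{R_{\mathfrak{m}}}$ contains $K$, so the $L$-structure on the completion is genuinely incompatible with the $K$-structure on $R$, and $K$-linear operators alone do not detect $\mathfrak{m}^n$ sharply (as the example $Q = (x^p - t) \subseteq \FF_p(t)[x]$ in the introduction already shows). The remedy is precisely to allow $\ZZ$-linear derivations on $K$, such as $\partial/\partial t$, to differentiate along the inseparable directions; this is what the passage from $D_{R|K}$ to $D_{R|\ZZ}$ buys us. A secondary technical difficulty, flagged in the abstract, is that $D_{R|\ZZ}$ is generally not finitely generated, so the construction has to produce the required operator directly at each order rather than via a finite-rank module argument.
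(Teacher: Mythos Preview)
Your easy inclusion is fine, and the reduction via the classical Zariski--Nagata formula $Q^{(n)}=\bigcap_{\m\supseteq Q}\m^n$ to the case of a maximal ideal is a legitimate alternative route; the paper instead localizes at $Q$ itself and proves that taking $\ZZ$-linear differential powers commutes with that localization (\Cref{localize}). Either way one lands on the same local problem: given $f\notin\m^n$, exhibit $\partial\in D^{n-1}_{R|\ZZ}$ with $\partial(f)\notin\m$. This is where your proposal has a real gap.

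Your recipe---compose Hasse--Schmidt operators $\partial_{x_i}^{[k]}$ with $\ZZ$-linear operators on $K$ after ``a change of variables relating the polynomial coordinates $x_i$ to a regular system of parameters of $\m$''---does not survive the inseparable case you yourself flag. Already for $\m=(x^p-t)\subseteq\FF_p(t)[x]$ there is no polynomial change of variables carrying $x$ to a uniformizer of $\m$; the Cohen isomorphism $\widehat{R_\m}\cong L\ps{y}$ uses $y=x-t^{1/p}$ with $t^{1/p}\notin R$, so the $L$-linear Taylor operator $\partial_y^{[\alpha]}$ on the completion has no evident incarnation in $D_{R|\ZZ}$. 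In that toy example $\partial/\partial t$ happens to do the job, but you give no mechanism for choosing the right composite in general, and ``should produce a single $\partial$'' is not an argument.

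The paper bypasses any explicit construction with two structural facts. First, $P^{n-1}_{R|\ZZ}$ is a \emph{free} $R$-module (\Cref{module of principal parts direct sum}): not finitely generated, but free, because it decomposes as a direct sum of copies of $P^i_{K|\ZZ}[x_1,\ldots,x_d]$ and each $P^i_{K|\ZZ}$ is a $K$-vector space. Second, the induced map $\overline{d}\colon R_Q/\m^n\to (R_Q/\m)\otimes_{R_Q} P^{n-1}_{R_Q|\ZZ}$ is injective (\Cref{lemma injective}); the point is that the image of $\ZZ$ lies in the prime field and hence inside a coefficient field of $R_Q/\m^n$, which lets one build an explicit left inverse. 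Together these force $d(f)$ to be part of a free basis whenever $f\notin\m^n$, and a coordinate projection $P^{n-1}_{R_Q|\ZZ}\to R_Q$ then furnishes the operator. The module of principal parts is doing essential work here, not mere bookkeeping: its freeness is precisely the substitute for the finite generation you correctly note is absent.
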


In the example of $Q = (x^p-t)$ over $K=\mathbb{F}_p(t)$, we now have access to the $\ZZ$-linear differential operator $\frac{\partial}{\partial t}$, which does see the job of excluding $x^p-t$:
\[ \frac{\partial}{\partial t}(x^p-t) \notin Q \implies x^p-t \notin Q\dif{2}{\ZZ}.\]

If instead of a field $K$ we consider a polynomial ring $R = V[x_1, \ldots, x_d]$ where $V$ is an unramified discrete valuation ring with uniformizer $p$, there are two different types of prime ideals: the primes that contain $p$ and the ones that do not. The primes that do not contain behave very much like the field case, and their symbolic powers are given by \mbox{$A$-linear} differential powers \cite[Theorem~3.9]{ZNmixed}. However, we cannot describe the symbolic powers of primes containing $p$ using only differential operators, since any differential operator is in particular additive. For example, when $R = \ZZ_p[x]$ and $Q = (p)$, we have $Q^{(n)} = Q^n$ for all $n$, but any $\ZZ$-linear differential operator $\partial$ of any order will satisfy
\[ \partial(p) = p \partial(1) \in Q \implies p \in Q\dif{n}{\ZZ} \smallsetminus Q^n \text{ for all } n \geqslant 2.\]
The key ingredient missing is that we need an operator that can decrease $p$-adic order. In \cite{ZNmixed}, we showed that the missing ingredient is a $p$-derivation; see \Cref{p-derivation definition}. While not all rings have a $p$-derivation, any polynomial ring over a complete unramified discrete valuation ring does. Even in this case, in \cite{ZNmixed} we needed to impose some separability assumptions, which we now remove.

\begin{theoremx}[see \Cref{main}]
Let $V$ be a complete discrete valuation ring with uniformizer $p>0$, and $R=V[x_1,\ldots,x_n]$. Let $\delta$ be a $p$-derivation on $R$. If $Q$ is a prime ideal of $R$ that contains $p$, then 
\[ Q^{(n)} = \{f \in R \mid (\delta^a \circ \partial)(f) \in Q \text{ for all } \partial \in D_{R|\ZZ}^b \text{ with } a+b \leqslant n-1\}.\]
\end{theoremx}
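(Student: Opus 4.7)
The plan is to prove both containments by induction on $n$, with the reverse containment being the more delicate one.

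For the containment $Q^{(n)} \subseteq \{f : (\delta^a \circ \partial)(f) \in Q\}$, I would establish two lemmas and then iterate them. The first says that $D^b_{R|\ZZ}$ maps $Q^{(m)}$ to $Q^{(m-b)}$ (a standard Leibniz/localization argument). The second says $\delta(Q^{(m)}) \subseteq Q^{(m-1)}$ when $Q \ni p$: given $f \in Q^{(m)}$ with $sf \in Q^m$ for some $s \notin Q$, the product rule $\delta(sf) = s^p\delta(f) + f^p\delta(s) + p\delta(s)\delta(f)$ rearranges to $\phi(s)\delta(f) = \delta(sf) - f^p\delta(s)$, where $\phi(s) = s^p + p\delta(s)$ is a unit modulo $Q$; the right-hand side lies in $Q^{(m-1)}$ by a separate induction showing $\delta(Q^m) \subseteq Q^{m-1}$.

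For the reverse containment, suppose $f$ satisfies the right-hand condition at level $n$. The $a = 0$ part of that condition, reduced modulo $p$, is the hypothesis of \Cref{thm general field} applied to $\bar Q = Q/pR$ in $R/pR$, so $\bar f \in \bar Q^{(n)}$ and we may write $f = g + ph$ with $g \in Q^{(n)}$. By the forward direction $g$ satisfies the same condition as $f$, hence so does $ph$ (a small additivity computation, since $\delta$ is not additive). Since $R_Q$ is regular with $p$ part of a regular system of parameters, $(Q^{(n)} : p) = Q^{(n-1)}$, so the problem reduces to showing $h \in Q^{(n-1)}$, and by the outer induction on $n$ to showing that $h$ satisfies the right-hand condition at level $n-1$.

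This last reduction is the crux. Using $\partial(ph) = p\partial(h)$ (valid because any $\ZZ$-linear operator commutes with multiplication by the integer $p$), together with the identity $\delta(pz) = p\delta(z) + z^p\delta(p)$ and the fact that $\delta(p) \equiv 1 \pmod p$, I would prove by induction on $k$ the iterated congruence
\[ \delta^k(pz) \equiv (\delta^{k-1}z)^p \pmod{(p,\,z,\,\delta z,\,\ldots,\,\delta^{k-2}z)}. \]
Granted this, a nested induction on $a$ extracts $(\delta^a\partial)(h) \in Q$ from $(\delta^{a+1}\partial)(ph) \in Q$: at each stage the inner induction places the lower iterates $\delta^j\partial(h)$ for $j < a$ into $Q$, killing the correction terms in the congruence, and primality of $Q$ unwinds the $p$-th power. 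The main technical obstacle is establishing the iterated congruence in a form robust enough to handle the correction terms produced by the non-additivity of $\delta$, especially at $p = 2$ where $\delta^2(p)$ is a unit modulo $p$; the saving feature is that all these corrections lie in the ideal generated by the lower iterates $z, \delta z, \ldots, \delta^{k-2}z$.
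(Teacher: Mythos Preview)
Your reduction step at the start of the reverse containment is where the argument breaks. You assert that the $a=0$ condition ``reduced modulo $p$'' becomes the hypothesis of \Cref{thm general field} for $\bar Q$ in $\bar R = R/pR$. Concretely, this is the claim that
\[
\partial(f)\in Q \ \text{for all}\ \partial\in D^{n-1}_{R|\ZZ}
\quad\Longrightarrow\quad
\bar\partial(\bar f)\in \bar Q \ \text{for all}\ \bar\partial\in D^{n-1}_{\bar R|\ZZ},
\]
which amounts to the surjectivity of the natural map $D^{n-1}_{R|\ZZ}\to D^{n-1}_{\bar R|\ZZ}$, i.e., to lifting every $\ZZ$-linear differential operator on $R/pR$ to one on $R$. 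This is exactly the nontrivial point of the paper. Because $P^{n-1}_{R|\ZZ}$ is typically \emph{not} finitely generated (and not projective) over $R$, there is no formal reason for $\Hom_R(P^{n-1}_{R|\ZZ},R)\to \Hom_R(P^{n-1}_{R|\ZZ},R/pR)\cong D^{n-1}_{\bar R|\ZZ}$ to be onto. The paper handles this with \Cref{d injective} and \Cref{lemma map}: the latter uses completeness of $V$ and the freeness of $P^i_{(V/p^j)|(\ZZ/p^j)}$ for all $j$ (via formal smoothness of $\ZZ/p^j\to V/p^j$) to show that any element of $P^{t-1}_{R|\ZZ}$ outside $QP^{t-1}_{R|\ZZ}$ is detected by some $R$-linear functional, yielding \Cref{existence diff operator}. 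That corollary is precisely what you need to conclude $f\in pR+Q^{(n)}$; \Cref{thm general field} alone does not give it.

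If you plug this gap (by invoking \Cref{existence diff operator} rather than \Cref{thm general field}), your strategy is a genuinely different packaging of the endgame from the paper's. The paper argues by contradiction on the maximal $p$-adic order $s$ of a putative $f\in Q\difM{n}\smallsetminus Q^{(n)}$, uses \Cref{existence diff operator} once to find $\partial$ with $\partial(g)\notin Q$, and then exploits that $Q\derp{s}$ is a $Q$-primary ideal with $p^{s+1}\in Q\derp{s}$ but $p^s\notin Q\derp{s}$ to reach a contradiction in one stroke. Your route---strip off one factor of $p$, use $(Q^{(n)}:p)=Q^{(n-1)}$, and run an outer induction on $n$---trades that single primary-ideal argument for your iterated congruence $\delta^k(pz)\equiv(\delta^{k-1}z)^p$ modulo $(p,z,\ldots,\delta^{k-2}z)$ and a nested induction. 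The congruence is correct (the $k=1,2$ cases check out, and the induction is routine once one tracks the non-additivity corrections), so the approach works, but it is heavier on the $\delta$-calculus than the paper's, which never needs to iterate $\delta$ on $pz$ explicitly.
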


We call the set on the right the $n$th mixed differential power of $Q$.

These new results and the classical proofs involve proving both containments separately. First one shows that the $n$th (mixed) differential power of $Q$ contains $Q^{(n)}$, which is elementary. To show that the symbolic power must contain the (mixed) differential power, in the classical setting one reduces the problem by localizing at $Q$, and showing two key facts:
\begin{itemize}
    \item[Step 1] The symbolic and (mixed) differential powers of the maximal ideal (in the appropriate localization) coincide.
    \item[Step 2] Taking (mixed) differential powers commutes with localization.
\end{itemize}

In the absence of a separability assumption, the first step may not hold for $K$-linear (or $V$-linear) differential operators, and one needs instead to consider $\ZZ$-linear differential operators. The second step involves proving that differential operators localize well, which no longer holds for $\ZZ$-linear differential operators. The main issue relates to the module of principal parts $P^n_{R|\ZZ}$, which has the property that $D^n_{R|\ZZ} \cong \Hom_R(P^n_{R|\ZZ}, R)$. If $P^n_{R|\ZZ}$ were finitely generated (which holds for $P^n_{R|K}$ and $P^n_{R|V}$), one can easily show that differential operators localize. But $P^n_{R|\ZZ}$ is typically not finitely generated, which is the main challenge that we need to overcome to prove \Cref{thm general field} and \Cref{main}.
In the equicharacteristic case, we also show that we can upgrade from $\ZZ$-linear differential operators to $K_0$-linear differential operators, where $K_0$ is a subfield of $K$ over which the modules of principal parts and differential operators are finitely generated $R$-modules.

Over a singular ring, the (mixed) differential powers cannot describe all symbolic powers. In fact, Brenner, the third author, and N\'u\~nez Betancourt showed that over any singular domain that is a finitely generated algebra over a perfect field, there exists a maximal ideal whose symbolic and differential powers do not coincide \cite[Theorem~10.2]{DiffSig}.

Cid-Ruiz \cite{Yairon} showed that in the case of singular rings, one can instead use differential operators $R \longrightarrow R/Q$ to describe the symbolic powers of $Q$; see also \cite[Proposition~1.5]{Brumfiel}. But as in the regular case, the main result needed an analogous separability assumption, which we now remove by using $\ZZ$-linear operators.

\begin{theoremx}[see \Cref{thm Yairon}]
Let $K$ be a field, and $R$ be a $K$-algebra essentially of finite type over $K$. For all prime ideals $Q$ of $R$,
\[
Q^{(n)} = \{f \in R \mid \partial(f) = 0 \text{ for all } \partial \in D^{n-1}_{R|\ZZ}(R,R/Q)\}.
\]
\end{theoremx}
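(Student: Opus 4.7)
The plan is to prove the two containments separately, using Theorem A as the main input for the nontrivial direction.

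For the easy containment $Q^{(n)} \subseteq \{f \in R : \partial(f) = 0 \text{ for all } \partial \in D^{n-1}_{R|\ZZ}(R,R/Q)\}$, take $f \in Q^{(n)}$ and $\partial \in D^{n-1}_{R|\ZZ}(R,R/Q)$. Any individual $\ZZ$-linear differential operator extends canonically to $\partial_Q : R_Q \to \kappa(Q)$ of the same order, even when the full module $D^{n-1}_{R|\ZZ}$ does not localize well. A standard induction on order via iterated commutators shows that any order $\leq n-1$ operator sends $(QR_Q)^n$ into $QR_Q \cdot \kappa(Q) = 0$; since $f/1 \in Q^n R_Q$ and $R/Q \hookrightarrow \kappa(Q)$, this forces $\partial(f) = 0$ in $R/Q$.

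For the reverse containment, localize at $Q$ to reduce to showing that every $g \in R_Q$ with $\partial(g) = 0$ for all $\partial \in D^{n-1}_{R_Q|\ZZ}(R_Q, \kappa(Q))$ lies in $(QR_Q)^n$. Writing $R$ as a localization of $T/I$ with $T = K[x_1,\ldots,x_d]$ and letting $\tilde Q \subseteq T$ be the preimage of $QR_Q$, we have $R_Q = T_{\tilde Q}/IT_{\tilde Q}$, $\kappa(Q) = \kappa(\tilde Q)$, and $(QR_Q)^n$ equals the image of $\tilde Q^{(n)}T_{\tilde Q}$. For $g \in R_Q \setminus (QR_Q)^n$, any lift $\tilde g \in T_{\tilde Q}$ lies outside $\tilde Q^{(n)}T_{\tilde Q} + IT_{\tilde Q}$. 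Using the natural bijection between $D^{n-1}_{R_Q|\ZZ}(R_Q, \kappa(Q))$ and those $\tilde \partial \in D^{n-1}_{T_{\tilde Q}|\ZZ}(T_{\tilde Q}, \kappa(\tilde Q))$ satisfying $\tilde\partial(IT_{\tilde Q}) = 0$ (obtained by precomposing with $T_{\tilde Q} \to R_Q$), the task reduces to producing such a $\tilde\partial$ with $\tilde\partial(\tilde g) \neq 0$.

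Producing this $\tilde\partial$ is the main obstacle. Theorem A, applied to $T$ and extended to the localization $T_{\tilde Q}$ (passing, if needed, to $K_0$-linear operators for a subfield $K_0 \subseteq K$ over which the modules of principal parts are finitely generated, as discussed in the introduction), furnishes a plentiful supply of $\ZZ$-linear operators of order $\leq n-1$ detecting $\tilde g$ modulo $\tilde Q^{(n)} T_{\tilde Q}$. Arranging additionally the condition $\tilde\partial(IT_{\tilde Q}) = 0$ amounts to a separation-of-points statement on the finite-length $R_Q$-module $M := T_{\tilde Q}/(\tilde Q^{(n)}T_{\tilde Q} + IT_{\tilde Q}) = R_Q/(QR_Q)^n$: a Matlis-type duality identifies the $\kappa(\tilde Q)$-valued order $\leq n-1$ operators annihilating $IT_{\tilde Q}$ with a family of $\ZZ$-linear functionals on $M$ rich enough to separate its nonzero classes. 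In the classical separable setting, Cid-Ruiz handles the descent through $I$ via a splitting of the principal parts exact sequence afforded by separability; in the present generality that splitting fails, but the extra flexibility of all $\ZZ$-linear operators furnished by Theorem A (rather than only $K$-linear ones) substitutes for it, carrying the argument through without any separability hypothesis.
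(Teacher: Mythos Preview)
Your proposal has a genuine gap in the reduction step. You write ``localize at $Q$ to reduce to showing that every $g \in R_Q$ with $\partial(g) = 0$ for all $\partial \in D^{n-1}_{R_Q|\ZZ}(R_Q, \kappa(Q))$ lies in $(QR_Q)^n$,'' but this reduction is precisely the step that fails for $\ZZ$-linear operators. It would require that every operator in $D^{n-1}_{R_Q|\ZZ}(R_Q,\kappa(Q))$ arises (after clearing a denominator) from some operator in $D^{n-1}_{R|\ZZ}(R,R/Q)$; equivalently, that $\Hom_R(P^{n-1}_{R|\ZZ},R/Q)$ localizes to $\Hom_{R_Q}(P^{n-1}_{R_Q|\ZZ},\kappa(Q))$. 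This holds when $P^{n-1}_{R|\ZZ}$ is finitely presented, but that module is typically not even finitely generated, as the introduction emphasizes. You mention the $K_0$ device in passing, but you apply it to the polynomial ring $T$ rather than to $R$, and you never return from operators on $T_{\tilde Q}$ (or $R_Q$) to operators on $R$; the hand-wave to ``Matlis-type duality'' does not supply this descent.

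The paper's argument avoids the polynomial presentation entirely and fixes the localization problem at its source. One first chooses $K_0\subseteq K$ so that $K_0$ lies in a coefficient field of $\widehat{R_Q}$ and $K/K_0$ has a finite $p$-basis (\Cref{lemma field 1}); then $P^{n-1}_{R|K_0}$ is finitely generated (\Cref{lemma field 2}), so $\Hom$ \emph{does} commute with localization at $Q$. For $f\notin Q^{(n)}$, \Cref{lemma injective} applied directly to $S=R_Q$ gives $\overline d(f/1)\notin \m P^{n-1}_{S|K_0}$, hence a map $P^{n-1}_{S|K_0}\to\kappa(Q)$ not killing $d(f/1)$, which by finite generation descends to some $\psi\colon P^{n-1}_{R|K_0}\to R/Q$ with $\psi(d(f))\neq 0$. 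The resulting $\partial=\psi\circ d$ is $K_0$-linear, hence $\ZZ$-linear, and one concludes via the sandwich $Q^{(n)}\subseteq Q^{\{n\}}\subseteq Q^{\{n\}_{K_0}}\subseteq Q^{(n)}$. Your separation-of-points idea is morally the content of \Cref{lemma injective}, but it must be coupled with the $K_0$-finiteness to perform the delocalization you skipped.
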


We also illustrate in \Cref{example Yairon mixed char} why there is no hope of such a theorem in mixed characteristic for prime ideals containing $p$.


Finally, in \Cref{bad dvr} we give an example of an unramified discrete valuation ring that has no $p$-derivations; for contrast, note that a complete unramified discrete valuation ring always has a $p$-derivation.

\section{Preliminaries}

\subsection{Symbolic powers} 
Let $R$ be a commutative ring with unity, and $I$ be an ideal in $R$. Let $W$ be the complement in $R$ of the minimal primes of $I$. For $n \geq 1$, the $n$th symbolic power of $I$ is defined as 
\[
I^{(n)} = I^nR_W \cap R.
\]
When $I=Q$ is a prime ideal, $Q^{(n)}$ coincides with the smallest $Q$-primary ideal containing~$Q^n$, i.e., with the $Q$-primary component of~$Q^n$.

A classical theorem of Zariski and Nagata \cite{Zariski,Nagata} identifies the $n$th symbolic power of a prime $Q \subseteq \mathbb{C}[x_1,\ldots,x_d]$ as the ideal of all functions that vanish up to order~$n$ along the algebraic variety defined by~$Q$. This result was extended first to the case of polynomial rings over a perfect field \cite{SurveySP}, and then to the case of polynomial rings of mixed characteristic \cite{ZNmixed}. In order to properly state these results, we need to recall the notion of differential operators, which we do next.

\subsection{Differential powers}
Let $A$ be a commutative ring with unity, and $R$ be an $A$-algebra. 

\begin{definition}\label{diff ops def}
The module of {\bf $A$-linear differential operators} on $R$ of order at most $n$ is the $R$-module $D^n_{R|A} \subseteq \Hom_A(R,R)$ defined inductively as follows:
\begin{itemize}
\item $D^0_{R|A} = \Hom_R(R,R)$.
\item $D^n_{R|A} = \{\partial \in \Hom_A(R,R) \mid [\partial,r] \in D^{n-1}_{R|A}$ for every $r \in R\}$.
\end{itemize}
Here $[\partial,r]$ denotes the commutator $[\partial,r] \colonequals \partial \circ \mu_r - \mu_r \partial$, with~$\mu_r$ the multiplication by~$r$.

More generally, given $R$-modules $M$ and $N$, and an element $r \in R$, let $\mu_{r,M}$ denote the map given by multiplication by $r$ on $M$. The module $D^n_{R|A}(M,N) \subseteq \Hom_A(M,N)$ of {\bf $A$-linear differential operators of order up to $n$} from $M$ to $N$ is defined as follows:
\begin{itemize}
\item $D^0_{R|A}(M,N) = \Hom_R(M,N)$.
\item $D^n_{R|A}(M,N) = \{\partial \in \Hom_A(M,N) \mid \partial \circ \mu_{r,M} - \mu_{r,N} \circ \partial \in D^{n-1}_{R|A}(M,N) $ for all $r \in R\}$.
\end{itemize}
The module of {\bf $A$-linear differential operators} from $M$ to $N$ is 
$$D_{R|A}(M,N) \colonequals \bigcup_{n \geqslant 0} D^n_{R|A}(M,N).$$ 
\end{definition}

Given an ideal $I$ in $R$, the {\bf $n$th ($A$-linear) differential power} of $I$ is
$$I\dif{n}{A} \colonequals \{f \in R \mid \partial(f) \in I \text{ for all } \partial \in D^{n-1}_{R|A}\}.$$

This set $I^{\langle n \rangle_A}$ is an ideal of $R$ \cite[Proposition 2.4]{SurveySP}. Moreover, if $I$ is a $Q$-primary ideal, then so is $I^{\langle n \rangle_A}$ for every $n \geqslant 1$ \cite[Proposition 2.6]{SurveySP}. In particular, $I^{(n)} \subseteq I^{\langle n \rangle_A}$ \cite[Proposition 3.2]{ZNmixed}. See also \cite{Brumfiel}.

\subsection{Modules of principal parts}

Let $R$ be any $A$-algebra. We set $P_{R|A} \colonequals R\otimes_A R$ and let $\mu_{R|A}\!: P_{R|A} \to R$ denote the multiplication map induced by $a \otimes b \mapsto ab$. Consider the kernel $\Delta_{R|A} \colonequals \ker(\mu_{R|A})$, and the quotient module 
\[ P^n_{R|A} \colonequals P_{R|A} / \Delta^{n+1}_{R|A}.\]
More generally, for an $R$-module $M$, we set $P_{R|A}(M) = R\otimes_A M$ and 
\[ P^n_{R|A}(M) \colonequals P_{R|A}(M) / \Delta^{n+1}_{R|A} P_{R|A}(M).\] 
In particular, $P^n_{R|A} = P^n_{R|A}(R)$. We consider $P^n_{R|A}(M)$ as an $R$-module via the action 
$$r \cdot \overline{s\otimes m} \colonequals \overline{rs\otimes m}.$$

The {\bf universal differential operator} of order $n$ from $M$ is the map 
\[d:\! M \to P^n_{R|A}(M) \qquad \textrm{given by} \quad d(m) = \overline{1 \otimes m}.\]
Any other differential operator of order at most $n$ factors through $d$: for any $R$-module $N$ and any differential operator $\partial \in D^n_{R|A}(M,N)$, there exists a unique $R$-linear map $\phi\in \Hom_R(P^n_{R|A}(M),N)$ such that $\partial = \phi \circ d$. In fact, setting $\pi \!: R \otimes_A M \to P^n_{R|A}(M)$ to be the canonical quotient map, the map
$$\Hom_R(P^n_{R|A}(M), N) \to D^n_{R|A}(M,N)$$
induced by the composition
$$\Hom_R(P^n_{R|A}(M), N) \xrightarrow{\pi^*} \Hom_R(R \otimes_A M, N) \xrightarrow{d^*} D^n_{R|A}(M,N)$$
is an isomorphism of modules over $R \otimes_A R$.

\begin{lemma}\label{lemma powers moving around in tensor}
	Let $A$ be any ring and $R$ be an $A$-algebra. Given an ideal $I$ in $R$, and any $n \geqslant t \geqslant 1$,
	$$(R \otimes_A I^n) P^{t}_{R | A} \subseteq (I^{n-t} \otimes_A R) P^{t}_{R | A}.$$
\end{lemma}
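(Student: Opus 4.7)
The plan is to reduce the claim to showing that for each $f \in I^n$, the image of $1 \otimes f$ in $P^t_{R|A}$ lies in the ideal $(I^{n-t} \otimes_A R) P^t_{R|A}$. Since $R \otimes_A I^n$ is spanned as an abelian group by pure tensors $r \otimes f$ with $f \in I^n$, and since $(r \otimes f) = (r \otimes 1)(1 \otimes f)$ in the commutative ring $R \otimes_A R$, the reduction is immediate: once $1 \otimes f$ is in the target ideal, multiplying by $r \otimes 1 \in P^t_{R|A}$ keeps us there.

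By bilinearity, I may further assume $f = a_1 \cdots a_n$ with each $a_j \in I$. The key elementary observation is that $\delta_j \colonequals a_j \otimes 1 - 1 \otimes a_j$ lies in $\Delta_{R|A}$, so $1 \otimes a_j = a_j \otimes 1 - \delta_j$. Since $R \otimes_A R$ is commutative, I can expand
$$1 \otimes f \;=\; \prod_{j=1}^{n}(1 \otimes a_j) \;=\; \prod_{j=1}^{n}(a_j \otimes 1 - \delta_j) \;=\; \sum_{S \subseteq \{1,\dots,n\}} (-1)^{|S|} \Bigl(\prod_{j \notin S} a_j\Bigr) \otimes 1 \;\cdot\; \prod_{j \in S} \delta_j,$$
splitting the element into a sum indexed by subsets $S \subseteq \{1, \dots, n\}$.

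I would then analyze the sum according to $|S|$. When $|S| \geq t+1$, the factor $\prod_{j \in S} \delta_j$ lies in $\Delta_{R|A}^{t+1}$, so the corresponding term vanishes in $P^t_{R|A}$. When $|S| \leq t$, the complement has size $|S^c| = n - |S| \geq n - t$, so $\prod_{j \notin S} a_j \in I^{n-t}$, and the term lies in $(I^{n-t} \otimes_A R) P^t_{R|A}$. Summing over $S$ shows the class of $1 \otimes f$ has the required form.

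There is no serious obstacle: the proof is an elementary binomial-style expansion hinged on the fact that the difference of the two canonical maps $R \to R \otimes_A R$ takes values in $\Delta_{R|A}$, combined with the definition of $P^t_{R|A}$ as the quotient by $\Delta_{R|A}^{t+1}$. The hypothesis $n \geq t$ enters solely through the inequality $n - t \geq 0$ needed for $I^{n-t}$ to be defined.
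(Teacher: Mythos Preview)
Your argument is correct and uses the same core idea as the paper: expand the product $\prod_j (1\otimes a_j)$ using $1\otimes a_j = a_j\otimes 1 - \delta_j$ with $\delta_j\in\Delta_{R|A}$, and separate the terms according to how many $\delta_j$'s appear. The only difference is cosmetic: the paper expands just $t+1$ of the factors and then runs a short induction on $n$, whereas you expand all $n$ factors at once and read off the conclusion directly, which is slightly cleaner.
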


By convention, $I^n = R$ whenever $n \leqslant 0$.

\begin{proof}
We will show that in the ring $R\otimes_A R$ one has
\[ R \otimes_A I^n + \Delta^{t+1}_{R|A} \subseteq \sum_{j=1}^{t+1} I^j \otimes_A I^{n-j} + \Delta^{t+1}_{R|A} ,\]
from which it follows by a straightforward induction on $n$ that
\[ R \otimes_A I^n + \Delta^{t+1}_{R|A} \subseteq I^{n-t} \otimes_A R + \Delta^{t+1}_{R|A}.\]
The statement then follows by taking homomorphic images in the principal parts.

To show the claim, first note that $(R \otimes_A I^n) P^t_{R | A}$ is generated by the classes of simple tensors of the form 
	$$r \otimes (f_1 \cdots f_n) \qquad \text{for } r \in R, f_i \in I.$$
	Moreover,
	$$r \otimes (f_1 \cdots f_n) = (r \otimes g) (1 \otimes f_1) \cdots (1 \otimes f_{t+1}),$$
	with $g = f_{t+2} \cdots f_n \in I^{n-t-1}$,
	and expanding $(1 \otimes f_1 - f_1 \otimes 1) \cdots (1 \otimes f_{t+1} - f_{t+1} \otimes 1) \in \Delta^{t+1}_{R|A}$ shows that
$$(1 \otimes f_1 - f_1 \otimes 1) \cdots (1 \otimes f_{t+1} - f_{t+1} \otimes 1) - (1 \otimes f_1) \cdots (1 \otimes f_{t+1}) \in \sum_{j=1}^{t+1} I^j \otimes_A I^{t+1-j}.$$
	Therefore,
\[\begin{aligned}(r \otimes g) (1\otimes f_1) (1 \otimes f_2) \cdots (1 \otimes f_{t+1}) + \Delta^{t+1}_{R|A} &\in (R\otimes_A I^{n-t-1}) \left(\sum_{j=1}^{t+1} I^j \otimes_A I^{t+1-j}\right)+ \Delta^{t+1}_{R|A}\\
&\subseteq \sum_{j=1}^{t+1} I^j \otimes_A I^{n-j} + \Delta^{t+1}_{R|A}\end{aligned}\]
	as claimed.
\end{proof}

\begin{lemma} \label{lemma containment diff}
	Let $A$ be any ring, $R$ an $A$-algebra, and $M$ an $R$-module. Given an ideal $I$ in $R$ and any $n \geqslant t \geqslant 1$, every differential operator $\partial \in D^t_{R|A}(R,M)$ satisfies
	$$\partial(I^n) \subseteq I^{n-t} M.$$
\end{lemma}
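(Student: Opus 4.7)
The plan is to apply the universal property of the module of principal parts together with the previous lemma. By the universal property recalled just before Lemma 2.3, any $\partial \in D^t_{R|A}(R,M)$ factors as $\partial = \phi \circ d$, where $d\colon R \to P^t_{R|A}$ sends $f \mapsto \overline{1 \otimes f}$ and $\phi \colon P^t_{R|A} \to M$ is $R$-linear with respect to the left action $r \cdot \overline{s \otimes m} = \overline{rs \otimes m}$.

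Given $f \in I^n$, the element $d(f) = \overline{1 \otimes f}$ lies in the image of $R \otimes_A I^n$ in $P^t_{R|A}$, i.e., in $(R \otimes_A I^n)P^t_{R|A}$. By \Cref{lemma powers moving around in tensor}, this submodule is contained in $(I^{n-t} \otimes_A R)P^t_{R|A}$. Since any element of the latter is a sum of classes $\overline{g \otimes r}$ with $g \in I^{n-t}$ and $r \in R$, and since $\overline{g \otimes r} = g \cdot \overline{1 \otimes r}$ under the left $R$-action, this submodule is contained in $I^{n-t} \cdot P^t_{R|A}$.

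Applying $\phi$ and using its $R$-linearity then yields
\[
\partial(f) = \phi(d(f)) \in \phi\bigl(I^{n-t} \cdot P^t_{R|A}\bigr) = I^{n-t} \cdot \phi\bigl(P^t_{R|A}\bigr) \subseteq I^{n-t} M,
\]
which gives the desired containment. The only subtle point is keeping straight that the $R$-module structure on $P^t_{R|A}$ used for the factorization of $\partial$ is the one coming from the left tensor factor, so that the rewriting $\overline{g \otimes r} = g \cdot \overline{1\otimes r}$ is legitimate; once this is observed, the argument is a direct consequence of the preceding lemma.
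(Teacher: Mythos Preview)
Your proof is correct and follows essentially the same route as the paper: factor $\partial$ through the universal differential operator $d\colon R\to P^t_{R|A}$, apply \Cref{lemma powers moving around in tensor} to move $I^n$ from the right tensor factor to $I^{n-t}$ on the left, and then use $R$-linearity of the resulting map $\phi$. Your explicit observation that $\overline{g\otimes r}=g\cdot\overline{1\otimes r}$ is exactly what justifies the step the paper records simply as ``since $\varphi$ is $R$-linear.''
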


\begin{proof}
	Let $d\!: R \longrightarrow P^t_{R|A}$ denote the universal differential operator of order at most $t$. For any $a \in I^n$,
	$$d(a) = 1 \otimes a \in (R \otimes_A I^n) P^t_{R | A}.$$		
	Any differential operator $\partial \in D^t_{R|A}(R,M)$ can be written as a composition $\partial = \varphi \circ d$ for some $\varphi \in \Hom_R(P^t_{R|A},M)$. Thus
\[ 
\begin{aligned}	
\partial(I^n) & = \varphi(d(I^n)) \\
	& \subseteq \varphi((R \otimes_A I^n) P^t_{R | A}) & \\
	& \subseteq \varphi ( (I^{n-t} \otimes_A R) P^t_{R | A} ) & \textrm{by \Cref{lemma powers moving around in tensor}} \\
	& \subseteq I^{n-t} \varphi ( P^t_{R | A} ) & \textrm{since } \varphi \textrm{ is } R\textrm{-linear} \\ 
	& \subseteq I^{n-t} M 
	\end{aligned}
	\]
    concluding the proof.
\end{proof}

\begin{lemma}\label{module of principal parts direct sum}
Let $A\to B$ be a ring homomorphism and $S = B[x_1, \ldots, x_d]$.
For any positive integer $n$, the module $P^n_{R|A}$ decomposes as an $R$-module direct sum of copies of $P^i_{B|A}$ for $i\leqslant n$. Moreover, if $B$ is a field, then $P^n_{R|A}$ is free, and so is $P^n_{R_Q|A}$ for any prime ideal $Q$ of~$R$.
\end{lemma}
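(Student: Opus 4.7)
The plan is to exploit the polynomial structure of $R = B[x_1,\ldots,x_d]$ by making a change of variables in $R \otimes_A R$. Setting $\tilde{\xi}_i \colonequals 1 \otimes x_i - x_i \otimes 1$, one gets an isomorphism of $R$-algebras
\[ R \otimes_A R \;\cong\; (R \otimes_A B)[\tilde{\xi}_1, \ldots, \tilde{\xi}_d], \]
under which the multiplication map $\mu_{R|A}$ factors as $R \otimes_A B \to R$, $r \otimes b \mapsto rb$, followed by the substitution $\tilde{\xi}_i \mapsto 0$. Its kernel is therefore $\Delta_{R|A} = (\tilde{\Delta}, \tilde{\xi}_1, \ldots, \tilde{\xi}_d)$, where $\tilde{\Delta} \colonequals \ker(R \otimes_A B \to R)$.

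I would then grade $(R \otimes_A B)[\tilde{\xi}]$ by $\tilde{\xi}$-monomial degree and compute $\Delta_{R|A}^{n+1} = (\tilde{\Delta}, \tilde{\xi})^{n+1}$ component by component. Expanding as $\sum_{i+j = n+1} \tilde{\Delta}^{\,i}(\tilde{\xi})^{j}$ and using $\tilde{\Delta}^{\,i} \supseteq \tilde{\Delta}^{\,i'}$ for $i \leqslant i'$, the $\tilde{\xi}^\alpha$-component of the ideal is $\tilde{\Delta}^{\,n+1-|\alpha|}$ for $|\alpha| \leqslant n$ and all of $R \otimes_A B$ for $|\alpha| \geqslant n+1$. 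Taking the quotient gives
\[ P^n_{R|A} \;\cong\; \bigoplus_{|\alpha| \leqslant n} \bigl((R \otimes_A B)/\tilde{\Delta}^{\,n+1-|\alpha|}\bigr)\,\tilde{\xi}^\alpha. \]
Now writing $R \otimes_A B \cong R \otimes_B (B \otimes_A B)$ and using flatness of $R$ over $B$ yields $\tilde{\Delta}^{\,k} \cong R \otimes_B \Delta_{B|A}^{\,k}$, so each summand is isomorphic to $R \otimes_B P^{n-|\alpha|}_{B|A}$. When $B$ is a field, every $P^i_{B|A}$ is a free $B$-module, so each $R \otimes_B P^i_{B|A}$ is a free $R$-module, and $P^n_{R|A}$ is free.

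For the claim about $R_Q$, the natural map $R \otimes_A R \to R_Q \otimes_A R_Q$ is the localization inverting both $s \otimes 1$ and $1 \otimes s$ for $s \in R \setminus Q$, and by exactness of localization $\Delta_{R_Q|A}$ is the extension of $\Delta_{R|A}$ along it, so
\[ P^n_{R_Q|A} \;\cong\; P^n_{R|A} \otimes_{R \otimes_A R} (R_Q \otimes_A R_Q). \]
Because the two $R$-actions on $P^n_{R|A}$ coming from the tensor factors differ by $r \otimes 1 - 1 \otimes r \in \Delta_{R|A}$, which is nilpotent modulo $\Delta_{R|A}^{n+1}$, inverting the left copy of $R \setminus Q$ automatically inverts the right one. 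Hence $P^n_{R_Q|A} \cong P^n_{R|A} \otimes_R R_Q$, which is free over $R_Q$ as the localization of a free module.

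The step I expect to require the most care is the last one: as the paper highlights in the introduction, modules of principal parts do not commute with localization in general, so to make the final step work one must combine the explicit $\tilde{\xi}$-graded decomposition with the nilpotency of $\Delta_{R|A}$ inside $P^n_{R|A}$, in order to see that $P^n_{R_Q|A}$ nevertheless coincides with the localization of the already-computed $P^n_{R|A}$ in this polynomial setting.
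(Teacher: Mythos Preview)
Your argument is correct and essentially identical to the paper's: both change variables to $z_i = y_i - x_i$ (your $\tilde\xi_i$), grade by $z$-degree, expand $(\Delta_{B|A} + (z))^{n+1}$, and read off the summands as $R\otimes_B P^i_{B|A}$. For the localization statement the paper simply cites \cite[Proposition~2.16]{DiffSig}, whereas you supply the standard nilpotency argument behind that result; note, however, that your worry in the last paragraph is misplaced --- the isomorphism $(P^n_{R|A})_Q \cong P^n_{R_Q|A}$ holds in complete generality by exactly the argument you gave, and what fails to localize when $P^n_{R|A}$ is not finitely generated is $D^n_{R|A} = \Hom_R(P^n_{R|A},R)$, not $P^n$ itself.
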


\begin{proof}
We have 
\[\begin{aligned}
P^n_{R|A}&\cong \frac{(B\otimes_A B)[x_1,\dots,x_d,y_1,\dots,y_d]}{\Delta_{B|A} + (y_1-x_1,\dots,y_d-x_d)^{n+1}}\\
& \cong \frac{(B\otimes_A B)[x_1,\dots,x_d,y_1,\dots,y_d]}{\sum_{i=0}^{n} \Delta_{B|A}^{i+1} (y_1-x_1,\dots,y_d-x_d)^{n-i}}\end{aligned}\]
Write $z_i= y_i-x_i$. We can give $P^n_{R|A}$ an $\NN$-grading by setting the degree of $z_i$ to be one and the degree of every element in $(B\otimes_A B)[x_1,\dots,x_d]$ to be zero. Then $P^n_{R|A}$ is generated over $(B\otimes_A B)[x_1,\dots,x_d]$ by the monomials in $z$ of degree at most $n$, and the annihilator of a monomial of degree $n+1-i$ is $\Delta_{B|A}^{i}$. Thus
\[ P^n_{R|A} \cong \bigoplus_{0\leq i \leq n, |\alpha|=n-i} P^i_{B|A}[x_1,\dots,x_d] z^\alpha,
\]
\begin{samepage}and this proves the first statement. When $B$ is a field, each $P^i_{B|A}$ is free as a module over~$B$, and thus $P^i_{B|A}[x_1,\dots,x_d]$ is free over $R$. The local statement follows from the fact that 
$$(P^n_{R | A})_Q \cong P^n_{R_Q | A},$$
which is shown in \cite[Proposition 2.16]{DiffSig}.
\end{samepage}
\end{proof}

\

\section{The equal characteristic case} \label{Section field}

\subsection{Polynomial rings over a field}

\begin{lemma}\label{lemma injective}
Let $A$ be any ring and $(R,\m,k)$ be a local $A$-algebra. Let 
$$\overline{d} \in D^n_{R|A}\big(\, R/\m^{n+1}, \, k \otimes_R P^{n+1}_{R|A}\, \big)$$ 
be the operator induced from the universal operator $d \in D^n_{R|A}(R,P^{n}_{R|A})$. If the image of $A$ is contained in a coefficient field for $R/\m^{n}$, then $\overline{d}$ is injective.
\end{lemma}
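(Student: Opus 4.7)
The strategy is to exhibit an explicit left inverse to $\overline{d}$, which immediately forces injectivity.

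First, I will make the target concrete. Unwinding definitions, the module in question equals $(k \otimes_A R)/\overline{\Delta}^{\,n+1}$, where $\overline{\Delta}$ is the kernel of the natural multiplication $k \otimes_A R \to k$. Because $1 \otimes m$ lies in $\overline{\Delta}$ for every $m \in \m$, the $(n+1)$-st power of $\overline{\Delta}$ contains $1 \otimes \m^{n+1}$; in particular, the whole quotient only depends on the Artinian quotient $R_0 \colonequals R/\m^{n+1}$, and $\overline{d}$ becomes the induced map $R_0 \to (k \otimes_A R_0)/\overline{\Delta}^{\,n+1}$ sending $r \mapsto 1 \otimes r$.

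Next, I will use the hypothesis to construct a candidate retraction. Identifying $k$ with the coefficient field $L$ via the residue isomorphism, and using that the image of $A$ in $R_0$ lands inside $L$, the two $A$-actions on $k \cong L$ and on $R_0$ can be shown to agree on the subring $L \subseteq R_0$; this will let me define a $\ZZ$-linear map
\[
\varphi \colon k \otimes_A R_0 \longrightarrow R_0, \qquad \alpha \otimes r \longmapsto \alpha \cdot r,
\]
with $\alpha$ viewed as an element of $L \subseteq R_0$ and multiplication performed in $R_0$. The well-definedness is the key technical point: it is precisely where the coefficient field hypothesis is needed.

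I will then verify that $\varphi$ descends modulo $\overline{\Delta}^{\,n+1}$. Generators of $\overline{\Delta}$ have the form $1 \otimes r - \overline{r}$, with $\overline{r}$ viewed in $L \subseteq R_0$; these are sent by $\varphi$ into $\m R_0$, so $\varphi(\overline{\Delta}^{\,n+1}) \subseteq (\m R_0)^{n+1} = 0$. The resulting map $\overline{\varphi}$ will therefore be a left inverse to $\overline{d}$, since $\overline{\varphi}(\overline{d}(r)) = \overline{\varphi}(1 \otimes r) = 1 \cdot r = r$; injectivity of $\overline{d}$ is then immediate. The main obstacle I expect is the $A$-balancedness check for $\varphi$: this is precisely where the coefficient field hypothesis does its work, ensuring that the two possibly distinct $A$-actions on $k$ and on $R_0$ give the same element of $R_0$ after identifying $k$ with $L$.
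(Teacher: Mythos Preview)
Your proposal is correct and follows essentially the same route as the paper: both arguments reduce to the Artinian quotient $R_0=R/\m^{n+1}$ and construct a left inverse to $\overline{d}$ by sending $\alpha\otimes r\in k\otimes_A R_0$ to $\sigma(\alpha)\cdot r\in R_0$, where $\sigma\colon k\to L$ is the inverse of the residue isomorphism onto the coefficient field, then check that this map carries $\overline{\Delta}$ into $\m R_0$ so that $\overline{\Delta}^{\,n+1}$ is killed. The paper writes the retraction as a composite $k\otimes_A R\xrightarrow{\sigma\otimes 1}K\otimes_A R\hookrightarrow R\otimes_A R\xrightarrow{\mu}R$ and verifies the $A$-balancedness you flag by observing that $\sigma$ is an $A$-algebra map precisely because the image of $A$ lies in the coefficient field; your outline identifies the same obstacle and the same resolution.
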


\begin{proof}
First, we claim that $R/\m \otimes_R P^n_{R/\m^{n+1}|A} \cong  R/ \m \otimes_R P^n_{R|A}$.
Indeed, by \Cref{lemma powers moving around in tensor},
\[ R \otimes_A \m^{n+1} + \Delta^{n+1}_{R|A} \subseteq \m \otimes_A R + \Delta^{n+1}_{R|A},\]
and thus
\[ R/\m \otimes_R P^n_{R/\m^{n+1}|A} \cong \frac{R \otimes_A R}{\m \otimes R + R \otimes \m^{n+1} + \Delta^{n+1}_{R|A}} = \frac{R \otimes_A R}{\m \otimes R + \Delta^{n+1}_{R|A}} = R/ \m \otimes_R P^n_{R|A}.\]

We can now replace $R$ by $R/\m^{n+1}$ and assume that $\m^{n+1}=0$ without loss of generality. We assume that $R$ has a coefficient field $K$ containing the image of $A$; let $i\!: K \to R$ be the inclusion map. Let $\pi: R \to k$ be the quotient map, and $\sigma:k \to K$ be the inverse of $\pi \circ i$. Observe that $\sigma$ is an $A$-algebra map.
Let $\varepsilon$ denote the composition
\[\varepsilon\!: k\otimes_R (R \otimes_A R) \cong k \otimes_A R \xrightarrow{\, \sigma \otimes \mathbf{1} \,}  K \otimes_A R \xrightarrow{\, i \otimes \mathbf{1} \,} R \otimes_A R \xrightarrow{\mu_{R|A}} R.\]

We claim that 
$$\varepsilon(\Delta^{n+1}_{R|A} (k \otimes_A R) ) =0.$$ 
Since $\varepsilon$ is an $A$-algebra homomorphism, it suffices to show that $\varepsilon(\Delta_{R|A} (k \otimes_A R ) ) \subseteq \m$. The ideal $\Delta_{R|A}$ is generated by elements of the form $1\otimes r - r\otimes 1 \in R\otimes_A R$ for $r\in R$. Since $R = K \oplus \m$, we can write $r\in R$ as $\lambda + m$ with $\lambda\in K$ and $m\in \m$. Then, 
\[ \begin{aligned}\varepsilon(1\otimes r - r\otimes 1) 
&= \varepsilon(1\otimes (\lambda+m) - (\lambda+m)\otimes 1) \\ 
&=\varepsilon(1\otimes \lambda - \lambda \otimes 1 + 1\otimes m - m \otimes 1) \\
&=\varepsilon(1\otimes \lambda - \pi(\lambda) \otimes 1 + 1\otimes m) \\
&=\varepsilon(1\otimes \lambda - \pi(\lambda) \otimes 1 + 1\otimes m) \\
&=\lambda - i \circ \sigma \circ \pi (\lambda) + m.
\end{aligned} \]
But then 
\[ \pi(\lambda - i \circ \sigma \circ \pi (\lambda) + m) = \pi(\lambda) - \pi \circ i \circ \sigma \circ \pi (\lambda) + \pi(m) = \pi(\lambda) - \pi(\lambda) + 0 = 0,\]
so $\varepsilon(1\otimes r - r\otimes 1)\in \m$. This shows that 
$$\varepsilon(\Delta^{n+1}_{R|A} (k \otimes_A R) ) =0.$$ 
Thus, $\varepsilon$ descends to a well-defined $A$-algebra map
\[  k\otimes_R P^{n}_{R|A} \cong \frac{k \otimes_A R}{\Delta^{n+1}_{R|A} (k \otimes_A R)} \longrightarrow R,\]
which we also denote by $\varepsilon$.

Finally, we observe that $\varepsilon$ is a left inverse for $d$. Indeed, $d(r) = 1 \otimes r$ in $k \otimes_A P^{n}_{R|A}$, and $\varepsilon(1 \otimes r) = r$. This shows that $d$ is injective.
\end{proof}

\begin{proposition}\label{localize} 
Let $A$ be any ring, and let $K$ be a field which is an $A$-algebra. Let $R=K[x_1,\ldots,x_d]$, and consider prime ideals $Q$ and $\m$ of $R$ and a $Q$-primary ideal $\a$ such that $\a \subseteq Q \subseteq \m$. For all $n \geqslant 1$,
$$(\a\dif{n}{A})R_{\m} = (\a R_{\m})\dif{n}{A}.$$
\end{proposition}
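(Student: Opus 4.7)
The strategy is to reduce both containments to the module-theoretic statement from \Cref{module of principal parts direct sum}: $P^{n-1}_{R|A}$ is a free $R$-module (possibly of infinite rank) and its formation commutes with localization, i.e.\ $(P^{n-1}_{R|A})_{\m} \cong P^{n-1}_{R_{\m}|A}$. The point is that freeness will replace the finite generation hypothesis available in the classical setting. This is essential because $P^{n-1}_{R|A}$ and $D^{n-1}_{R|A}$ need not be finitely generated as $R$-modules when $K$ is not separable over its prime field and $A$ is, for instance, $\ZZ$ or the prime field.

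For the inclusion $\a\dif{n}{A} R_{\m} \subseteq (\a R_{\m})\dif{n}{A}$, the key claim is that $f \in \a\dif{n}{A}$ already implies $d(f) = 1 \otimes f \in \a \cdot P^{n-1}_{R|A}$. I would argue this by fixing a free $R$-basis of $P^{n-1}_{R|A}$ (using the decomposition from \Cref{module of principal parts direct sum}) and writing $d(f)$ in this basis. Only finitely many coordinates appear; each coordinate is the image of $d(f)$ under a coordinate projection $\pi_i \in \Hom_R(P^{n-1}_{R|A}, R) = D^{n-1}_{R|A}$, hence equals $\partial_i(f)$ for some $\partial_i \in D^{n-1}_{R|A}$, and therefore lies in $\a$ by hypothesis. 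This gives $d(f) \in \a \cdot P^{n-1}_{R|A}$. Localizing at $\m$ and applying the isomorphism $(P^{n-1}_{R|A})_{\m} \cong P^{n-1}_{R_{\m}|A}$ yields $d(f/1) \in \a R_{\m} \cdot P^{n-1}_{R_{\m}|A}$. Then any $\partial \in D^{n-1}_{R_{\m}|A}$, being the $R_{\m}$-linear map corresponding to some $\varphi \in \Hom_{R_{\m}}(P^{n-1}_{R_{\m}|A}, R_{\m})$, sends $d(f/1)$ into $\a R_{\m}$, so $f/1 \in (\a R_{\m})\dif{n}{A}$.

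For the reverse inclusion, given $f/s \in (\a R_{\m})\dif{n}{A}$, multiplying by $s$ inside the ideal reduces to the case $s = 1$. Each $\partial \in D^{n-1}_{R|A}$, with classifying map $\varphi \colon P^{n-1}_{R|A} \to R$, extends to $\widetilde{\partial} \in D^{n-1}_{R_{\m}|A}$ by post-composing $\varphi$ with $R \to R_{\m}$ and applying the tensor-hom adjunction $\Hom_R(P^{n-1}_{R|A}, R_{\m}) \cong \Hom_{R_{\m}}(P^{n-1}_{R_{\m}|A}, R_{\m})$; a direct check on $d(f/1)$ gives $\widetilde{\partial}(f/1) = \partial(f)/1$. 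By hypothesis, $\partial(f)/1 \in \a R_{\m}$. Since $\a$ is $Q$-primary with $Q \subseteq \m$, the elements of $R \setminus \m$ avoid $Q$ and are therefore nonzerodivisors modulo $\a$, so $\a R_{\m} \cap R = \a$. Hence $\partial(f) \in \a$ for every $\partial \in D^{n-1}_{R|A}$, which gives $f \in \a\dif{n}{A}$ and $f/s \in \a\dif{n}{A} R_{\m}$.

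The main conceptual obstacle is precisely the failure of finite generation for $P^{n-1}_{R|A}$ and $D^{n-1}_{R|A}$, which means the classical strategy of clearing a single denominator $s \notin \m$ so that $s\partial \in D^{n-1}_{R|A}$ for every $\partial \in D^{n-1}_{R_{\m}|A}$ is unavailable. The plan above sidesteps this by analyzing only the single element $d(f)$ at a time, where mere freeness of $P^{n-1}_{R|A}$ suffices to detect membership in $\a \cdot P^{n-1}_{R|A}$ via finitely many coordinate projections.
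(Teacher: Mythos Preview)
Your proof is correct and follows essentially the same approach as the paper: both arguments rest on the freeness of $P^{n-1}_{R|A}$ from \Cref{module of principal parts direct sum}, together with the localization isomorphism $(P^{n-1}_{R|A})_{\m}\cong P^{n-1}_{R_{\m}|A}$, to show that $a\in \a\dif{n}{A}$ is equivalent to $d(a)\in \a P^{n-1}_{R|A}$ and hence to $a/1\in(\a R_{\m})\dif{n}{A}$. The paper packages this as a single chain of biconditionals and then invokes $Q$-primariness of $\a\dif{n}{A}$ at the end, whereas you write out the two containments separately and use $\a R_{\m}\cap R=\a$ for the reverse inclusion; these are equivalent presentations of the same idea.
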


\begin{proof}
By \Cref{module of principal parts direct sum}, $P^{n-1}_{R|A}$ is a free $R$-module. 
Consider the commutative diagram
\[
\xymatrix{
R \ar[d] \ar[r]^-{d} & P^{n-1}_{R|A} \ar[d] \\
R_\m \ar[r]^-{d'} & (P^{n-1}_{R|A})_\m \cong P^{n-1}_{R_\m|A}
}
\]
where the vertical maps are the natural localizations, the horizontal maps are the universal derivations, and the isomorphism on the right-bottom corner follows from \cite[Proposition 2.16]{DiffSig}. Given $a \in R$, since $P^{n-1}_{R|A}$ is free we have that 
$$a \notin \a \dif{n}{A} \quad \text{ if and only if } \quad d(a) \notin \a P^{n-1}_{R|A},$$ 
and this happens if and only if 
$$d' \left( \frac{a}{1} \right) = \frac{d(a)}{1} \notin \a P^{n-1}_{R_\m|A}.$$ 
Since $P^{n-1}_{R_\m|A}$ is also free, the latter happens if and only if 
$$\frac{a}{1} \notin (\a R_\m)\dif{n}{A}.$$ 
This shows that
$$a \in \a \dif{n}{A} \qquad \text{if and only if} \qquad \frac{a}{1} \in (\a R_\m)\dif{n}{A}.$$
The ideal $\a\dif{n}{A}$ is $Q$-primary \cite[Proposition 3.2]{ZNmixed}, so
\[
(\a\dif{n}{A})R_\m = \left\{\frac{a}{b} \ \bigg| \  a \in \a\dif{n}{A}, b \notin \m\right\}.
\]
On the other hand, given an element $\frac{a}{b} \in R_\m$, 
$$\frac{a}{b} \in (\a R_\m)\dif{n}{A} \quad \text{if and only if} \qquad \frac{a}{1} \in (\a R_\m)\dif{n}{A},$$
but we have just shown this is equivalent to $a \in \a\dif{n}{A}$. This completes the proof.
\end{proof}

\begin{theorem}\label{thm general field}
Let $K$ be any field and $R=K[x_1, \ldots, x_d]$. For all prime ideals $Q$ in $R$ and all $n \geqslant 1$, 
$$Q\dif{n}{\ZZ} = Q^{(n)}.$$ 	
\end{theorem}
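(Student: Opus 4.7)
The inclusion $Q^{(n)} \subseteq Q\dif{n}{\ZZ}$ is already recorded in the preliminaries, so only the reverse containment needs work. The plan is to combine the Zariski-Nagata intersection formula recalled in the introduction with Proposition \ref{localize} and Lemma \ref{lemma injective}, following the classical two-step strategy (Step 1 via Lemma \ref{lemma injective}, Step 2 via Proposition \ref{localize}).

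First I would invoke the formula $Q^{(n)} = \bigcap_{\m \supseteq Q} \m^n$, where $\m$ runs over maximal ideals containing $Q$, to reduce to showing $Q\dif{n}{\ZZ} \subseteq \m^n$ for each such $\m$. Since $\partial(f) \in Q \subseteq \m$ whenever $f \in Q\dif{n}{\ZZ}$ and $\partial \in D^{n-1}_{R|\ZZ}$, one has $Q\dif{n}{\ZZ} \subseteq \m\dif{n}{\ZZ}$, so it suffices to prove $\m\dif{n}{\ZZ} \subseteq \m^n$ for every maximal $\m$. Applying Proposition \ref{localize} with $\a = Q = \m$ yields $(\m\dif{n}{\ZZ}) R_\m = (\m R_\m)\dif{n}{\ZZ}$, and since both $\m\dif{n}{\ZZ}$ and $\m^n$ are $\m$-primary, each equals the contraction of its extension to $R_\m$. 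This reduces the theorem to the local statement $(\m R_\m)\dif{n}{\ZZ} \subseteq (\m R_\m)^n$.

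To prove this local statement, I would apply Lemma \ref{lemma injective} with $A = \ZZ$, the local ring $R_\m$, and order $n-1$. The coefficient-field hypothesis is verified as follows: $R_\m/(\m R_\m)^n$ is an artinian (hence complete) equicharacteristic local ring whose residue field $k$ is a finite extension of $K$, so Cohen's structure theorem supplies a coefficient field $L \cong k$, and the image of $\ZZ$ sits inside the prime subfield of $L$ regardless of the characteristic. The lemma then yields that the induced operator $\bar{d}\colon R_\m/(\m R_\m)^n \to k \otimes_{R_\m} P^{n-1}_{R_\m|\ZZ}$ is injective. Now fix $f \in (\m R_\m)\dif{n}{\ZZ}$. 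By Lemma \ref{module of principal parts direct sum}, $P^{n-1}_{R_\m|\ZZ}$ is a free $R_\m$-module, so a dual-basis argument (already used in the proof of Proposition \ref{localize}) shows that the condition $\partial(f) \in \m R_\m$ for all $\partial \in D^{n-1}_{R_\m|\ZZ}$ is equivalent to $d(f) \in (\m R_\m)\, P^{n-1}_{R_\m|\ZZ}$. In particular, the image of $d(f)$ in $k \otimes_{R_\m} P^{n-1}_{R_\m|\ZZ}$ vanishes, i.e., $\bar{d}(\bar{f}) = 0$. Injectivity of $\bar{d}$ then forces $\bar{f} = 0$, that is, $f \in (\m R_\m)^n$.

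The main obstacle is the coefficient-field hypothesis of Lemma \ref{lemma injective}: this is precisely the mechanism that replaces the separability assumption needed in the earlier literature, since working over $\ZZ$ rather than over $K$ sidesteps inseparability because the prime field is automatically perfect and Cohen's theorem provides the required coefficient field. A secondary concern, that $P^{n-1}_{R|\ZZ}$ is not finitely generated in general, is handled by the freeness in Lemma \ref{module of principal parts direct sum}, which is specific to the polynomial-ring setting and is what permits both the dual-basis step above and the localization result already used.
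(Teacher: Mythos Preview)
Your proof is correct and uses the same key ingredients as the paper—Lemma~\ref{lemma injective} for the local step, Proposition~\ref{localize} for passage to the localization, and the freeness of $P^{n-1}_{R|\ZZ}$ from Lemma~\ref{module of principal parts direct sum} to translate between the differential-power condition and membership in $\m P^{n-1}$. The one substantive difference is in the reduction: you invoke the Zariski--Nagata intersection formula $Q^{(n)}=\bigcap_{\m\supseteq Q}\m^n$ to reduce to maximal ideals and then localize at each such $\m$, whereas the paper localizes directly at $Q$ itself (using that $Q^{(n)}$ is $Q$-primary, so it suffices to check the containment after localizing at $Q$). The paper's route is slightly leaner—it avoids importing the intersection formula as a black box and works with a single localization rather than one for each maximal ideal—but your detour costs nothing logically and the local argument you run at $R_\m$ is word-for-word the same as the one the paper runs at $R_Q$.
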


\begin{proof}
By \cite[Proposition 2.6]{SurveySP}, the ideal $Q\dif{n}{\ZZ}$ is $Q$-primary, and by \cite[Proposition 2.5]{SurveySP}, $Q\dif{n}{\ZZ}$ contains $Q^n$. Note that \cite[Proposition 2.5]{SurveySP} and \cite[Proposition 2.6]{SurveySP} are stated for $Q\dif{n}{K}$, but the proofs do not use this assumption; see also \cite[Proposition 1.3]{Brumfiel} for an even more general statement. Moreover, $Q^{(n)}$ is by definition the smallest $Q$-primary ideal containing $Q^n$, and thus $Q^{(n)} \subseteq Q\dif{n}{\ZZ}$. So we only need to show that $Q\dif{n}{\ZZ} \subseteq Q^{(n)}$.

To prove it, we will show that the containment holds after localizing at the unique associated prime $Q$ of $Q^{(n)}$, which is $Q$. That is, we will show that $Q\dif{n}{\ZZ}R_Q \subseteq Q^{(n)}R_Q = (QR_Q)^n$. By \Cref{localize}, 
$$Q\dif{n}{\ZZ}R_Q = (QR_Q)\dif{n}{\ZZ}.$$
Replace $R$ by $R_Q$ and let $\m = QR_Q$. We have reduced the problem to showing that $\m\dif{n}{\ZZ} \subseteq \m^n$. Fix $f \notin \m^{n}$. Let $\pi \!: R \to R/\m^{n}$ be the canonical projection map, and note that $\pi(f) \neq 0$. By \Cref{lemma injective}, the map $\overline{d}\!: R/\m^{n} \to R/\m \otimes_R P^{n-1}_{R|\ZZ}$ induced by the universal differential operator $d\!: R \to P^n_{R|\ZZ}$  is injective, so $\overline{d}(\pi(f)) \neq 0$. The commutative diagram
$$\xymatrix{R \ar[r]^-d \ar[d]_-\pi & P^{n-1}_{R|\ZZ} \ar[d] \\ 
R/\m^{n} \ar[r]^-{\overline{d}} & R/\m \otimes_R P^{n-1}_{R|\ZZ}}$$
shows that $d(f)$ is a minimal generator of $P^{n-1}_{R|\ZZ}$. But $P^{n-1}_{R|\ZZ}$ is a free $R$-module by \Cref{module of principal parts direct sum}, and thus $d(f)$ generates a free summand of $P^{n-1}_{R|\ZZ}$. In particular, there exists an $R$-linear map $s\!: P^{n-1}_{R|\ZZ} \to R$ sending $d(f)$ to $1$. The composition $\partial \colonequals s \circ d$ is a $\ZZ$-linear differential operator of order up to $n$ on $R$, and $\partial(f) \notin \m$. Therefore, $f \notin \m\dif{n}{\ZZ}$.
\end{proof}

\begin{remark}\label{redu1}
    While \Cref{thm general field} is stated for prime ideals, the statement follows immediately for any radical ideal: it is elementary to check that taking differential powers commutes with taking intersections, so if $I$ has minimal primes
    \[ I = P_1 \cap \cdots \cap P_s,\]
    then
    \[\begin{aligned}
    I^{(n)} & = P_1^{(n)} \cap \cdots \cap P_s^{(n)} \\
    & = P_1\dif{n}{\ZZ} \cap \cdots \cap P_s\dif{n}{\ZZ} & \text{by \Cref{thm general field}}\\
    & = \left( P_1 \cap \cdots \cap P_s \right)\dif{n}{\ZZ} = I\dif{n}{\ZZ}.
    \end{aligned} \]
\end{remark}

One of the difficulties of the proof of \Cref{thm general field} is that, in general, the modules $P^n_{R|\ZZ}$ are far from being finitely generated. Our next goal is to show that, given a fixed prime $Q$ in $R=K[x_1,\ldots,x_n]$, one can find a suitable subfield $K_0 \subseteq K$ such that all modules of principal parts $P^n_{R|K_0}$ are finitely generated over $R$, and such that symbolic powers of $Q$ can be computed using $K_0$-linear differential operators.

\begin{lemma}\label{lemma field 1} 
Let $K$ be a field of characteristic $p>0$. Let $R$ be a local ring essentially of finite type over $K$ with residue field $k$. There exists a field $K_0\subseteq K$ such that the extension has a finite $p$-basis and $K_0$ is contained in a coefficient field for $\widehat{R}$.
\end{lemma}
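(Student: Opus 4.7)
Since $R$ is essentially of finite type over $K$, the residue field $k = R/\mathfrak{m}$ is a finitely generated field extension of $K$ (identifying $K$ with its image in $k$). The two conditions on $K_0$ pull in opposite directions: the finite $p$-basis condition for $K/K_0$ demands that $K_0$ be large inside $K$, while the standard criterion for $K_0$ to sit in a coefficient field of $\widehat{R}$ (via a refined Cohen structure theorem) amounts to $k/\bar{K_0}$ being separable in the Mac Lane sense, which pushes $\bar{K_0}$ to be small inside $k$. The finite generation of $k/K$ is what reconciles the two.

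My plan is to use the exact sequence of K\"ahler differentials
\[ \Omega_{K/\mathbb{F}_p} \otimes_K k \longrightarrow \Omega_{k/\mathbb{F}_p} \longrightarrow \Omega_{k/K} \longrightarrow 0, \]
whose final term is finite-dimensional over $k$, so the kernel $V$ of the first arrow is a finite-dimensional $k$-subspace of $\Omega_{K/\mathbb{F}_p} \otimes_K k$. I pick finitely many $p$-independent elements $c_1,\dots,c_r \in K$ whose differentials span $V$, extend them to a $p$-basis $\{c_i\}_{i\in I}$ of $K$ over $\mathbb{F}_p$, and define $K_0 := \mathbb{F}_p(c_i : i > r) \subseteq K$.

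The verification then breaks into two checks. First, $\{c_1,\dots,c_r\}$ is a $p$-basis of $K$ over $K_0$, because $K^p \cdot K_0 = \mathbb{F}_p(\{c_i^p : i\le r\}\cup\{c_i : i>r\})$ and $K$ is generated over this field by the $c_i$ with $i \le r$, with the required $p$-independence inherited from the original $p$-basis. Second, since $K_0$ is purely transcendental over $\mathbb{F}_p$ on the set $\{c_i : i>r\}$, the module $\Omega_{K_0/\mathbb{F}_p}$ has $K_0$-basis $\{dc_i : i>r\}$, so the composition $\Omega_{K_0/\mathbb{F}_p} \otimes_{K_0} k \to \Omega_{K/\mathbb{F}_p} \otimes_K k \to \Omega_{k/\mathbb{F}_p}$ factors through the $k$-subspace of $\Omega_{K/\mathbb{F}_p} \otimes_K k$ spanned by $\{dc_i \otimes 1 : i > r\}$, which meets $V$ trivially. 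Hence the composition is injective, which by Mac Lane's criterion is precisely the separability of $k/\bar{K_0}$. A standard refinement of Cohen's structure theorem (that a subfield with separable residue extension extends to a coefficient field) then places $K_0$ inside a coefficient field of $\widehat{R}$.

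The main obstacle is the $p$-basis and differentials bookkeeping: one must verify carefully that deleting finitely many $p$-basis generators of $K$ and keeping $\mathbb{F}_p$ together with the remaining ones produces a subfield $K_0$ whose image in $k$ satisfies Mac Lane separability, and that $\{c_1,\dots,c_r\}$ actually becomes a $p$-basis of $K/K_0$. This rests on the characteristic-$p$ duality between $p$-bases of $K/\mathbb{F}_p$ and $K$-bases of $\Omega_{K/\mathbb{F}_p}$, together with the finite-dimensionality of $\Omega_{k/K}$ guaranteed by the hypothesis that $R$ is essentially of finite type over $K$.
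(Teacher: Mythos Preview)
Your approach and the paper's are variants of the same idea: use the Jacobi--Zariski sequence for $\FF_p\subseteq K\subseteq k$ to isolate a finite ``bad'' subset $\Theta''$ of a $p$-basis $\Theta$ of $K$, and build $K_0$ from the complementary set $\Theta'$. The paper sets $K_0=\bigcap_e K^{p^e}(\Theta')$, which is visibly contained in the explicit Cohen coefficient field $\bigcap_e \widehat{R}^{\,p^e}[\Theta'\cup\tilde\tau]$, and then checks the finite $p$-basis condition; you instead take the (possibly smaller) field $K_0=\FF_p(\Theta')$, verify the finite $p$-basis directly, and reach the coefficient field via Mac Lane's separability criterion together with a formal-smoothness lifting argument. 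Both routes are valid. Two small wobbles in your write-up deserve attention. First, the finite-dimensionality of $V$ does \emph{not} follow from the three-term sequence you quote (a finite-dimensional cokernel says nothing about the kernel); you need the four-term sequence with $\Gamma_{k|K}$ and the fact that $\Gamma_{k|K}$ is finite-dimensional for $k/K$ finitely generated (Matsumura, Theorem~26.10), which is exactly what the paper invokes. Second, the assertion that $K_0$ is purely transcendental over $\FF_p$ is correct but not as immediate as you suggest; in any case you only need that $\{c_i:i>r\}$ is a $p$-basis of $K_0$, and this follows directly since $p$-independence over $K^p$ implies $p$-independence over the smaller field $K_0^p$, while generation of $K_0$ over $K_0^p$ by these elements is automatic from the definition $K_0=\FF_p(c_i:i>r)$.
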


\begin{proof} 
Let $\Theta$ be a $p$-basis for $K$. Since $k$ is a finitely generated field extension of $K$, the vector spaces $\Omega_{k|K}$ and $\Gamma_{k|K}$ are finite dimensional, with $\Gamma_{k|K}=\Gamma_{k|K|\FF_p}$ as in \cite[\S26]{Matsumura}. Fix a $p$-basis $\tau$ for $k/K$, so $\{ dt \mid t\in \tau\}$ is a basis for $\Omega_{k|K}$. From the Jacobi-Zariski sequence
\[ 0 \to \Gamma_{k|K} \to k \otimes_K \Omega_{K|\FF_p} \to \Omega_{k|\FF_p} \to \Omega_{k|K} \to 0,\]
since the first and last modules are finite dimensional by \cite[Theorem~26.10]{Matsumura}, there is a finite subset $\Theta'' \subseteq \Theta$ with $\Theta'=\Theta \smallsetminus \Theta''$ such that $\{ d\theta \ | \ \theta\in \Theta'\} \cup \{ dt \ |\ t\in \tau\}$ is a basis for $\Omega_{k|\FF_p}$, and hence $\Theta' \cup \tau$ is a $p$-basis for $k$. Let $\tilde{\tau}$ be a set of lifts of $\tau$ to $\widehat{R}$. Then, by the classical construction of coefficient fields in positive characteristic \cite{Cohen}, $K' \colonequals \bigcap_e R^{p^e}[\Theta' \cup \tilde{\tau}]$ is the unique coefficient field for $\hat{R}$ containing $\tilde{\tau}$. Set 
$$K_0 \colonequals \bigcap_e K^{p^e}[\Theta'] = \bigcap_e K^{p^e}(\Theta').$$ 
By construction, $K_0 \subseteq K'$ and $K_0 \subseteq K$. Moreover, since $K_0$ is a nested intersection of fields, it is also a field. We claim that $\Theta''$ contains a $p$-basis for $K/K_0$. Indeed, 
\[  K^p(K_0)(\Theta'') = K^p \left( \bigcap K^{p^e}(\Theta') \right)(\Theta'') \supseteq K^p(\Theta',\Theta'') = K. \qedhere\]
\end{proof}

\begin{lemma}\label{lemma field 2} 
Let $R$ be a ring essentially of finite type over a field $K$ of characteristic $p>0$. If $K_0\subseteq K$ is a subfield such that $K/K_0$ has a finite $p$-basis, then $P^n_{R|K_0}$ is finitely generated for all $n$.
\end{lemma}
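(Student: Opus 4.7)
The plan is to first reduce to showing that $P^n_{K|K_0}$ is finitely generated over $K$, and then to establish this finiteness using only the finite $p$-basis, without requiring any further separability hypothesis. To carry out the reduction, write $R = W^{-1}(S/I)$ with $S = K[x_1, \ldots, x_d]$ and $I \subseteq S$ an ideal. Passing from $S$ to $S/I$ induces a surjection $P^n_{S|K_0} \twoheadrightarrow P^n_{S/I|K_0}$, and localization commutes with principal parts by \cite[Proposition 2.16]{DiffSig}, so it suffices to show that $P^n_{S|K_0}$ is a finitely generated $S$-module. \Cref{module of principal parts direct sum} realizes $P^n_{S|K_0}$ as a finite $S$-direct sum of copies of $P^i_{K|K_0} \otimes_K S$ for $0 \leqslant i \leqslant n$, reducing us to proving that each $P^i_{K|K_0}$ is finitely generated over $K$.

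For this core claim, let $\{\theta_1, \ldots, \theta_s\}$ denote the given finite $p$-basis. The first key step is to show that $\Omega_{K|K_0} = \Delta_{K|K_0}/\Delta_{K|K_0}^2$ is finite-dimensional over $K$, spanned by $d\theta_1, \ldots, d\theta_s$. Since $K = K_0 K^p[\theta_1,\ldots,\theta_s]$ with the monomials $\theta^\alpha$ for $0 \leqslant \alpha_i < p$ forming a basis over $K_0 K^p$, one writes any $a \in K$ as $a = \sum_\alpha c_\alpha \theta^\alpha$ with $c_\alpha \in K_0 K^p$. The universal derivation $d\colon K \to \Omega_{K|K_0}$ vanishes on $K_0 K^p$, because it kills $K_0$ and kills every $p$-th power thanks to $d(b^p) = p b^{p-1} db = 0$ in characteristic $p$. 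The Leibniz rule then expresses each $d\theta^\alpha$ as a $K$-combination of $d\theta_1, \ldots, d\theta_s$, and the claim follows.

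To pass from $\Omega_{K|K_0}$ to higher principal parts, the plan is to use the canonical filtration of $P^n_{K|K_0}$ by powers of $\Delta \colonequals \Delta_{K|K_0}$, giving the short exact sequences
\[ 0 \longrightarrow \Delta^i/\Delta^{i+1} \longrightarrow P^i_{K|K_0} \longrightarrow P^{i-1}_{K|K_0} \longrightarrow 0. \]
The inclusion $(a \otimes 1 - 1 \otimes a) \Delta^i \subseteq \Delta^{i+1}$ shows that the two natural $K$-module structures on $\Delta^i/\Delta^{i+1}$ coincide, so multiplication in $K \otimes_{K_0} K$ induces a well-defined $K$-linear surjection
\[ \Sym^i_K(\Omega_{K|K_0}) \twoheadrightarrow \Delta^i/\Delta^{i+1}. \]
Since $\Omega_{K|K_0}$ is finite-dimensional over $K$, so is each symmetric power, hence each graded piece, and induction on $n$ with base case $P^0_{K|K_0} = K$ yields the finite-dimensionality of $P^n_{K|K_0}$.

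The only mildly delicate point I anticipate is verifying that the two $K$-actions on $\Delta^i/\Delta^{i+1}$ agree, so that the symmetric-power map is well-defined; once this is in place, the rest is a routine chain of reductions from the polynomial-ring case to the field case, together with the basic computation that $d$ kills $p$-th powers in characteristic $p$.
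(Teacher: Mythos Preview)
Your argument is correct, but it proceeds along a genuinely different route from the paper's. You reduce via \Cref{module of principal parts direct sum} to the finite-dimensionality of $P^i_{K|K_0}$ over $K$, and then use the conormal filtration: the finite $p$-basis forces $\Omega_{K|K_0}$ to be finite-dimensional, and the surjections $\Sym^i_K(\Omega_{K|K_0})\twoheadrightarrow \Delta^i/\Delta^{i+1}$ (well-defined because the two $K$-actions agree modulo $\Delta^{i+1}$) propagate finiteness up the tower. The paper instead never leaves the ring $R$: for $p^e>n$ it observes that $\Delta_{K_0[R^{p^e}]|K_0}\subseteq \Delta_{R|K_0}^{[p^e]}\subseteq \Delta_{R|K_0}^{n+1}$, so $P^n_{R|K_0}=P^n_{R|K_0[R^{p^e}]}$, and then surjects from $P^n_{R|K_0(K^{p^e})}$; the finite $p$-basis makes $K$ algebra-finite over $K_0(K^{p^e})$, hence $R$ is algebra-finite over $K_0(K^{p^e})$, and the result is immediate. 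Your approach is more structural and makes the role of $\Omega_{K|K_0}$ explicit; the paper's Frobenius trick is shorter and avoids both the filtration and the appeal to \Cref{module of principal parts direct sum}, at the cost of being more opaque about where the bound on generators comes from.
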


\begin{proof} 
Without loss of generality, we can assume $R$ is algebra-finite over $K$.
Fix $n$, and take some $e$ such that $p^e>n$. Consider the ring inclusions $K_0 \subseteq K_0[R^{p^e}] \subseteq R$. Since 
\[R\otimes_{K_0[R^{p^e}]} R \cong (R\otimes_{K_0} R)/(\Delta_{K_0[R^{p^e}]|K_0})(R\otimes_{K_0} R),\] 
we have 
\[P^n_{R|K_0[R^{p^e]}} \cong P^n_{R|K_0} / \Delta_{K_0[R^{p^e}]|K_0} P^n_{R|K_0}.\] 
But 
\[ \Delta_{K_0[R^{p^e}]|K_0} \subseteq \Delta_{R|K_0}^{[p^e]} \subseteq \Delta_{R|K_0}^{p^e} \subseteq \Delta_{R|K_0}^{n},\] 
so we have 
\[ P^n_{R|K_0} = P^n_{R|K_0[R^{p^e}]}.\] 
Since $K^{p^e} \subseteq R^{p^e}$, there is a surjective map of $R$-modules
\[ P^n_{R|K_0(K^{p^e})} \longrightarrow P^n_{R|K_0[R^{p^e}]}\] 
and thus it suffices to show that $P^n_{R|K_0(K^{p^e})}$ is a finitely generated $R$-module. But the $p$-basis of $K/K_0$ generates $K$ as an algebra over $K_0(K^{p^e})$, so $R$ is algebra-finite over $K_0(K^{p^e})$, and hence the module of principal parts is finitely generated.
\end{proof}

\begin{theorem}
Let $K$ be any field and $R=K[x_1,\dots, x_d]$. For every prime ideal $Q$ in $R$, there exists a subfield $K_0 \subseteq K$ such that for all $n \geqslant 1$, 
$$Q\dif{n}{K_0} = Q^{(n)}$$
and the modules of differential operators $D^n_{R|K_0}$ and principal parts $P^n_{R|K_0}$ are finitely generated over $R$.
\end{theorem}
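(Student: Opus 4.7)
The plan is to split on whether $K$ is perfect. If $K$ is perfect (in particular whenever $\mathrm{char}(K)=0$), I take $K_0 = K$: the module $P^n_{R|K}$ is already finite free over $R$ by \Cref{module of principal parts direct sum}, so the dual $D^n_{R|K} \cong \Hom_R(P^n_{R|K}, R)$ is finitely generated since $R$ is noetherian; and the identity $Q\dif{n}{K} = Q^{(n)}$ is the classical Zariski--Nagata theorem for perfect fields recalled in the introduction.

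The interesting case is $\mathrm{char}(K) = p > 0$ with $K$ non-perfect. I will apply \Cref{lemma field 1} to the local ring $R_Q$, which is essentially of finite type over $K$, to produce a subfield $K_0 \subseteq K$ such that $K/K_0$ admits a finite $p$-basis and $K_0$ sits inside a coefficient field of $\widehat{R_Q}$. Then \Cref{lemma field 2} ensures $P^n_{R|K_0}$ is finitely generated over $R$, and noetherianity of $R$ again gives $D^n_{R|K_0}$ finitely generated.

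It remains to prove $Q\dif{n}{K_0} = Q^{(n)}$, and my plan is to transcribe the argument of \Cref{thm general field} with $\ZZ$ replaced by $K_0$. The containment $Q^{(n)} \subseteq Q\dif{n}{K_0}$ is automatic from $Q\dif{n}{K_0}$ being $Q$-primary and containing $Q^n$. For the reverse, $P^{n-1}_{R|K_0}$ is free by \Cref{module of principal parts direct sum}, so \Cref{localize} applies and reduces the problem to showing $\m\dif{n}{K_0} \subseteq \m^n$ for $\m = QR_Q$. The coefficient-field hypothesis of \Cref{lemma injective} is met because the Artinian quotient $R_Q/\m^n$ coincides with $\widehat{R_Q}/\m^n \widehat{R_Q}$, into whose coefficient field $K_0$ embeds by construction; the universal-operator-plus-free-summand argument from the proof of \Cref{thm general field} then produces, for every $f \notin \m^n$, a $K_0$-linear differential operator $\partial$ with $\partial(f) \notin \m$. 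The only genuinely new content beyond \Cref{thm general field} is the construction of $K_0$ via Lemmas~\ref{lemma field 1} and~\ref{lemma field 2}; the rest is routine verification that the hypotheses transfer correctly, the one detail worth flagging being the passage of the coefficient field from $\widehat{R_Q}$ to $R_Q/\m^n$, which is free since the quotient is already complete.
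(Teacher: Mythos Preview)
Your proposal is correct and follows essentially the same approach as the paper: construct $K_0$ via \Cref{lemma field 1} applied to $R_Q$, invoke \Cref{lemma field 2} and \Cref{module of principal parts direct sum} for finite freeness of $P^{n-1}_{R|K_0}$, then rerun the argument of \Cref{thm general field} with $K_0$ in place of $\ZZ$, using \Cref{lemma injective} (whose coefficient-field hypothesis you correctly verify via $R_Q/\m^n \cong \widehat{R_Q}/\m^n\widehat{R_Q}$) and \Cref{localize}. Your explicit case split on whether $K$ is perfect is a worthwhile addition, since \Cref{lemma field 1} is stated only for positive characteristic; the paper leaves this reduction implicit.
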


\begin{proof}
First, note that $Q^{(n)} = Q\dif{n}{\ZZ} \subseteq Q\dif{n}{L}$ for any field $L$. Let $S=R_Q$, and let $K_0 \subseteq K$ be as in \Cref{lemma field 1} applied to $S=R_Q$. By \Cref{lemma field 2} applied to the extension $K/K_0$ and the ring $R$, and \Cref{module of principal parts direct sum}, we conclude that $P^{n-1}_{R|K_0}$ is a finitely generated free $R$-module. 

Let $\m = QR_Q$ be the maximal ideal of $S$. Since $K_0$ is contained in a coefficient field of~$\widehat{S}$, it is contained in a coefficient field of $S/\m^{n}$. In particular, 
$$\overline{d}: S/\m^{n} \to P^{n-1}_{S|K_0} \otimes_S S/\m$$ 
is injective by \Cref{lemma injective}. If $f \notin Q^{(n)}$, then its image $f/1$ in $S/\m^n$ is nonzero, and by injectivity $\overline{d} \left( f/1 \right) \neq 0$. Since $P^{n-1}_{S|K_0}$ is a free $S$-module, the element $\overline{d}(f/1)$ must generate a free $S$-summand. We can then find $\varphi \in \Hom_S(P^{n-1}_{S|K_0},S)$ such that $\varphi(\overline{d}(f/1)) = 1$, and since $\varphi \circ \overline{d}$ is a $K_0$-linear differential operator of order up to $n$, we conclude that $f/1 \notin \m\dif{n}{K_0}$. Using \Cref{localize}, 
$$\frac{f}{1} \notin \m\dif{n}{K_0} = (QR_Q)\dif{n}{K_0} \cong \left(Q\dif{n}{K_0}\right)R_Q.$$ 
We conclude that $f \notin  Q\dif{n}{K_0}$.
\end{proof}

While the field $K_0$ in the previous theorem works for $Q^{(n)}$ for every $n$, it cannot be chosen independently of $Q$, as the next example shows.

\begin{example} 
Consider $K=\FF_p(t_1,t_2,\dots)$ and $R=K[x]$. Let $K_0$ be a subfield of $K$. Note that $P^1_{R|K_0} = \Omega_{R|K_0} \oplus R$ is finitely generated if and only if $K/K_0$ admits a finite $p$-basis.
In particular, if $P^1_{R|K_0}$ is finitely generated then there is some $i$ such that $t_i$ is not part of a $p$-basis for $K/K_0$. Take $Q=(x^p-t_i)$. Then $d t_i =0$ in $\Omega_{K|K_0}$, and hence in $\Omega_{R|K_0}$. Thus every differential operator of order $1$ kills $t_i$. Likewise, every derivation kills $x^p$, so every derivation kills $x^p-t_i$. Thus
\[ x^p-t_i \in Q\dif{2}{K_0} \smallsetminus Q^{(2)}.\]
\end{example}

\subsection{Singular $K$-algebras}

Let $K$ be a field, $R$ a $K$-algebra essentially of finite type over~$K$, and $Q$ is a prime ideal of $R$.
In \cite[Theorem~C]{Yairon}, Cid-Ruiz proves that if the extension $K \hookrightarrow R_Q/QR_Q$ is separable, then
\[
Q^{(n)} = \{f \in R \mid \partial(f) =0 \text{ for all } \partial \in D^{n-1}_{R|K}(R,R/Q)\}.
\]

We remove the separability assumption on the field extension.

\begin{theorem} \label{thm Yairon}
Let $K$ be a field, and $R$ be a $K$-algebra essentially of finite type over $K$. For all prime ideals $Q$ of $R$,
\[
Q^{(n)} = \{f \in R \mid \partial(f) = 0 \text{ for all } \partial \in D^{n-1}_{R|\ZZ}(R,R/Q)\}.
\]
Moreover, for each prime ideal $Q$, there exists a subfield $K_0 \subseteq K$ such that for all $n \geqslant 1$,
\[
Q^{(n)} = \{f \in R \mid \partial(f) = 0 \text{ for all } \partial \in D^{n-1}_{R|K_0}(R,R/Q)\},
\] 
and the modules of differential operators $D^n_{R|K_0}$ and principal parts $P^n_{R|K_0}$ are finitely generated over $R$.
\end{theorem}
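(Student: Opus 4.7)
The plan is to follow the strategy of \Cref{thm general field}: localize at $Q$, combine the injectivity from \Cref{lemma injective} with the finiteness provided by the subfield $K_0$ from \Cref{lemma field 1}, and deduce both the $K_0$-linear and $\ZZ$-linear characterizations simultaneously. Set $(S,\m,k)\colonequals (R_Q,QR_Q,k(Q))$ and choose a subfield $K_0 \subseteq K$ via \Cref{lemma field 1} applied to $S$ (in characteristic zero, take $K_0 = K$). By \Cref{lemma field 2}, $P^{n-1}_{R|K_0}$ is finitely generated over $R$, and since $R$ is Noetherian it is finitely presented; in particular, $D^{n-1}_{R|K_0} \cong \Hom_R(P^{n-1}_{R|K_0},R)$ is finitely generated.

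For the easy containment, let $f \in Q^{(n)}$ and take any $\partial = \varphi \circ d \in D^{n-1}_{R|\ZZ}(R,R/Q)$, where $\varphi \in \Hom_R(P^{n-1}_{R|\ZZ},R/Q)$. Localizing at $Q$ and using $(P^{n-1}_{R|\ZZ})_Q \cong P^{n-1}_{S|\ZZ}$ from \cite[Proposition~2.16]{DiffSig}, we obtain $\partial' \in D^{n-1}_{S|\ZZ}(S,k)$. By \Cref{lemma containment diff} with $t=n-1$ and $I=\m$, $\partial'(\m^n) \subseteq \m\cdot k = 0$, so the image of $\partial(f)$ in $k$ vanishes; injectivity of $R/Q \hookrightarrow k$ then forces $\partial(f) = 0$ in $R/Q$.

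For the reverse containment, suppose $f \notin Q^{(n)}$, so $f/1$ has nonzero image in $S/\m^n$. Since $K_0$ lies inside a coefficient field of $\widehat{S}$, and therefore of $S/\m^n$, \Cref{lemma injective} applied with $A = K_0$ implies that $\overline{d}\colon S/\m^n \to k \otimes_S P^{n-1}_{S|K_0}$ is injective, and in particular $\overline{d}(f/1)\neq 0$. A $k$-linear functional separating this element gives an $S$-linear map $\varphi\colon P^{n-1}_{S|K_0}\to k$ with $\varphi(d'(f/1))\neq 0$. Because $P^{n-1}_{R|K_0}$ is finitely presented, $\Hom$ commutes with localization at $Q$, so $\varphi = \psi/u$ for some $\psi \in \Hom_R(P^{n-1}_{R|K_0},R/Q)$ and $u \in R \setminus Q$. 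The composition $\partial\colonequals \psi \circ d$ then lies in $D^{n-1}_{R|K_0}(R,R/Q) \subseteq D^{n-1}_{R|\ZZ}(R,R/Q)$ and satisfies $\partial(f) \neq 0$, establishing the reverse containment for both the $K_0$-linear and $\ZZ$-linear forms.

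The main obstacle is that $P^{n-1}_{R|\ZZ}$ is typically not finitely generated, so localization of $\ZZ$-linear differential operators cannot be handled directly by a $\Hom$-localization argument. The subfield $K_0$ circumvents this: it yields an intermediate, finitely presented module of principal parts for which $\Hom$ does commute with localization, while still being small enough to sit inside a coefficient field of $\widehat S$, so that the injectivity hypothesis of \Cref{lemma injective} is met.
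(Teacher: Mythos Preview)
Your argument is correct and follows essentially the same route as the paper: pick $K_0$ via \Cref{lemma field 1}, use \Cref{lemma injective} to see that $d(f)$ survives in $k\otimes_S P^{n-1}_{S|K_0}$, separate it by a map to $k$, and descend to a map $P^{n-1}_{R|K_0}\to R/Q$ using finite presentation of $P^{n-1}_{R|K_0}$. The only differences are cosmetic: for the easy containment the paper simply cites \cite[Proposition~1.3]{Brumfiel} to get that the right-hand side is $Q$-primary and contains $Q^n$, whereas you give a direct argument via localization and \Cref{lemma containment diff}; and you make explicit that in characteristic zero one may take $K_0=K$, a case the paper leaves implicit.
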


\begin{proof}
Fix a prime $Q$. Let $S=R_Q$ with maximal ideal $\m=QS_Q$. By \Cref{lemma field 1} applied to $S$, there exists a field $K_0 \subseteq K$ such that the extension has a finite $p$-basis, and $K_0$ is contained in a coefficient field for $\widehat{S}$. 

Set
\[ Q\cidr{n} \colonequals  \{f \in R \mid \partial(f) = 0 \text{ for all } \partial \in D^{n-1}_{R|\ZZ}(R,R/Q) \}\]
and 
\[ Q^{\left\{n\right\}_{K_0}} \colonequals  \{f \in R \mid \partial(f) = 0 \text{ for all } \partial \in D^{n-1}_{R|K_0}(R,R/Q) \}.\]
By \cite[Proposition 1.3]{Brumfiel}, the ideals $Q\cidr{n}$ and $Q^{{\left\{n\right\}}_{K_0}}$ are both $Q$-primary and contain $Q^n$, and hence they must both contain $Q^{(n)}$. Note that any $K_0$-linear differential operator is in particular $\ZZ$-linear, and thus we must have
\[ Q^{(n)} \subseteq Q\cidr{n} \subseteq Q^{\left\{n\right\}_{K_0}}.\]

Now consider $f \notin Q^{(n)}$, and note that the image $f/1$ of $f$ in $S = R_Q$ does not belong to $\m^n$. 
Moreover, by \Cref{lemma field 2} applied to $K_0 \subseteq K$ and $R$, the $R$-module $P^{n-1}_{R|K_0}$ is finitely generated. By \Cref{lemma injective} we have that $\overline{d}(f/1) \notin \m P^{n-1}_{S|K_0}$ and, in particular, we can find a map $\varphi: P^{n-1}_{S|K_0} \to S/\m = \kappa(Q)$ such that $\varphi(\overline{d}(f/1)) \neq 0$.

Since $P^{n-1}_{R|K_0}$ is finitely generated and $(P^{n-1}_{R|K_0})_Q \cong P^{n-1}_{S|K_0}$, we have an isomorphism
\[ 
\Hom_{R_Q}(P^{n-1}_{R_Q|K_0},\kappa(Q)) \cong \left(\Hom_R(P^{n-1}_{R|K_0},R/Q)\right)_Q.
\] 
In particular, there exists $\psi\!: P^{n-1}_{R|K_0} \to R/Q$ such that $\psi(d(f)) \ne 0$. Since 
\[
\partial = \psi \circ d \in D^{n-1}_{R|K_0}(R,R/Q) \subseteq D^{n-1}_{R|\ZZ}(R,R/Q),
\] 
we have found a $K_0$-linear (and thus $\ZZ$-linear) differential operator of order up to $n-1$ that sends $f$ outside of $Q$. We conclude that $f \notin Q^{{\left\{n\right\}}_{K_0}}$. This shows that
\[ Q^{{\left\{n\right\}}_{K_0}} \subseteq Q^{(n)}\]
and since we showed above that $Q^{(n)} \subseteq Q\cidr{n} \subseteq Q^{{\left\{n\right\}}_{K_0}}$, we conclude that
\[ Q\cidr{n} = Q^{{\left\{n\right\}}_{K_0}} = Q^{(n)}. \qedhere\]
\end{proof}

\begin{remark}\label{redu2}
One can write \Cref{thm Yairon} for any radical ideal.
If
\[ I = Q_1 \cap \cdots \cap Q_s,\]
then 
\[\begin{aligned}
I^{(n)} & = Q_1^{(n)} \cap \cdots \cap Q_s^{(n)} \\
& = \{f \in R \mid \partial(f) = 0 \text{ for all } \partial \in D^{n-1}_{R|\ZZ}(R,R/Q_i) \text{ and all } i \}  
& \text{by \Cref{thm Yairon}}.
    \end{aligned} \]
The functor $D^n_{R|\ZZ}(R, -)$ is left exact, so from the exact sequence
\[ \xymatrix{0 \ar[r] & R/I \ar[r] & \displaystyle\bigoplus_{i=1}^s R/Q_i}\]
we conclude that every differential operator in $D^n_{R|\ZZ}(R,R/I)$ embeds in $\bigoplus\limits_{i=1}^s D^n_{R|\ZZ}(R,R/Q_i)$.

Thus if $D^{n-1}_{R|\ZZ}(R,R/Q_i)$ sends $f$ to $0$ for all $i$, then every $D^{n-1}_{R|\ZZ}(R,R/I)$ must send $f$ to $0$ as well. Conversely, if $f \notin I^{(n)}$, then $f \notin Q_i^{(n)}$ for some $i$, and thus by \Cref{thm Yairon} there exists a differential operator $\partial \in D^{n-1}_{R|\ZZ}(R,R/Q_i)$ such that $\partial(f) \neq 0$. By prime avoidance, one can find 
\[ v \in \left(\bigcap_{j \neq i} Q_j\right) \smallsetminus Q_i, \]
and the composition
\[ \xymatrix{\alpha\!: R \ar[r]^-{\partial} & R/Q_i \ar[r]^-{\cdot v} & R/I}\]
is a differential operator $\alpha \in D^{n-1}_{R|\ZZ}(R,R/I)$ such that $\alpha(f) \neq 0$. Thus
\[ I^{(n)} = \{f \in R \mid \partial(f) = 0 \text{ for all } \partial \in D^{n-1}_{R|\ZZ}(R,R/I) \}.\]
See \cite{YaironStrumfels} for characterizations of nonprimary ideals via differential operators.
\end{remark}

\

\section{The mixed characteristic case}

We start by recalling the notion of $p$-derivation, introduced independently by Joyal \cite{Joyal} and Buium \cite{Buium1995}.

\begin{definition}\label{p-derivation definition}
Let $S$ be a ring, and $p$ a prime integer that is a nonzerodivisor on $S$. A set-theoretic function $\delta\!: S \to S$ is a {\bf $p$-derivation} if the function $\phi_p\!:S \to S$ defined by $\phi_p(x) = x^p+p\delta(x)$ is a ring homomorphism.
\end{definition}

By rewriting the definition of a ring homomorphism in terms of $\delta$, one can show that $\delta$ is a $p$-derivation if and only if it satisfies the following properties:
\begin{itemize}
	\item $\delta(1) = 0$.
	\item $\delta(xy) = x^p\delta(y) + \delta(x)y^p + p\delta(x)\delta(y)$ for all $x, y \in S$.
	\item $\delta(x+y) = \delta(x) + \delta(y) + \frac{x^p+y^p-(x+y)^p}{p}$ for all $x,y \in S$.
\end{itemize}

Now let $V$ be a discrete valuation ring with uniformizer $p>0$, and assume that the polynomial ring $R=V[x_1,\ldots,x_m]$ has a $p$-derivation $\delta$. In our previous work in \cite{ZNmixed}, we introduced the notion of mixed differential powers in order to characterize symbolic powers of prime ideals $Q$ of $R$ that contain $p>0$: 
\[ Q\difMV{n} \colonequals \{f \in R \mid (\delta^a \circ \partial)(f) \in Q \text{ for all } \partial \in D_{R|V}^b \text{ with } a+b \leqslant n-1\}.\]
We proved in the main result of \cite{ZNmixed} that if $p \in Q$ and the extension $V/(p) \hookrightarrow R_Q/QR_Q$ is separable then $Q^{(n)} = Q\difMV{n}$. The separability assumption was not only needed in the proof, but crucial for the validity of the statement. We recall an illustrative example to show why this is the case.

\begin{example}[{see \cite[Example~3.28]{ZNmixed}}]
Let $V=\ZZ[t]_{(p)}$ and $R=V[x]$. For the prime ideal $Q=(p,x^p-t)$, the extension $V/(p) \cong \FF_p(t) \hookrightarrow R_Q/QR_Q \cong \FF_p(t^{1/p})$ is not separable. In this case, one can show that $D^1_{R|V}(x^p-t) \subseteq Q$ and there is a $p$-derivation $\delta$ on $R$ such that $\delta(x^p-t) \in (p) \subseteq Q$. It follows that $x^p-t \in Q\difMV{2} \smallsetminus Q^{(2)} = Q^2$.
\end{example}

The goal of this section is to remove the assumption of separability, and to do so we must slightly change the definition of mixed differential powers given in \cite{ZNmixed} to include all differential operators.

\begin{definition}
Let $A$ be a ring, and $S$ be an $A$-algebra with a $p$-derivation $\delta$. Given an integer $n$ and a prime ideal $Q$ of $S$, let
\[ 
Q\derp{s} \colonequals \{f \in S \mid \delta^a(f) \in Q  \text{ for all } a \leqslant s-1\}.
\]
We define the $n$th mixed differential power of $Q$ as
\begin{align*}
Q\difM{n} & \colonequals \{f \in S \mid (\delta^a \circ \partial)(f) \in Q \text{ for all } \partial \in D_{S|\ZZ}^b \text{ with } a+b \leqslant n-1\} \\
& = \bigcap_{s+t \leq n+1} \left(Q\derp{s}\right)\dif{t}{\ZZ}.
\end{align*}
\end{definition}

The mixed differential power $Q\difM{n}$ of a prime ideal $Q$ is a $Q$-primary ideal of $R$ that contains $Q^n$, and thus $Q\difM{n} \supseteq Q^{(n)}$ for all $n \geqslant 1$ \cite[Proposition 3.19]{ZNmixed}. 

\begin{remark} 
A priori the definition depends on the fixed $p$-derivation $\delta$. We will show in our main theorem, \Cref{main}, that the definition is actually independent of this choice.
\end{remark}

Before stating the main theorem of this section, we need some preliminary results.

\begin{lemma}\label{d injective}
Let $V$ be a complete discrete valuation ring with uniformizer $p>0$, and let $Q$ be a prime ideal in $S=V[x_1,\ldots,x_n]$ that contains $p$. Let $(R,\m) = (S_Q,QS_Q)$, and let $f_1,\ldots,f_v,p$ be a minimal generating set of $\m$. Set $\n=(f_1,\ldots,f_v)$. Let $t>0$ and consider the universal derivation $d\!: R \to P^{t-1}_{R|\ZZ}$. The induced map 
\[ \overline{d}:R/(p, \n^t) \longrightarrow P^{t-1}_{R|\ZZ}/\m P^{t-1}_{R|\ZZ} \]
is injective.
\end{lemma}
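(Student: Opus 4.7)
I plan to mirror the proof of \Cref{lemma injective} by passing to the Artinian quotient $R' := R/(p,\n^t)$. The obstacle is that $R$ itself is not equicharacteristic and need not admit a coefficient field, but $R'$ is Artinian local of characteristic $p$ (since $p=0$ in it) with $(\m')^t = 0$ for $\m'=\m R'$ (using $\m^t \subseteq (p) + \n^t$). Hence $R'$ admits a coefficient field $K \cong k$ containing $\FF_p$ by Cohen's structure theorem; crucially, $\FF_p$ is exactly the image of $\ZZ$ in $R'$.

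The first substantive step is to identify the target of $\overline{d}$: I plan to show that the natural maps induce an isomorphism
\[ R/\m \otimes_R P^{t-1}_{R|\ZZ} \; \cong \; R'/\m' \otimes_{R'} P^{t-1}_{R'|\ZZ}.\]
This amounts to proving $R \otimes_\ZZ (p, \n^t) \subseteq \m \otimes_\ZZ R + \Delta^t_{R|\ZZ}$ inside $R \otimes_\ZZ R$. The $\n^t$-piece is absorbed by \Cref{lemma powers moving around in tensor} applied with $I = \n$ and $n = t$. For the $p$-piece no diagonal correction is needed: since $p \in \ZZ$, $\ZZ$-linearity of the tensor gives $R \otimes_\ZZ (p) = p(R \otimes_\ZZ R) \subseteq (p) \otimes_\ZZ R \subseteq \m \otimes_\ZZ R$.

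Under this identification, $\overline{d}$ is identified with the map $\overline{d}'\colon R' \to P^{t-1}_{R'|\ZZ}/\m' P^{t-1}_{R'|\ZZ}$ induced by the universal derivation $d' \colon R' \to P^{t-1}_{R'|\ZZ}$ of $R'$, so it is enough to show $\overline{d}'$ is injective. Since $R'$ is local with $(\m')^t = 0$ and the image of $\ZZ$ in $R'$ lies in a coefficient field of $R'$ by the first paragraph, this follows from \Cref{lemma injective} applied with $A = \ZZ$, $R = R'$, and $n = t-1$.

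The main difficulty relative to \Cref{lemma injective} is the absence of a coefficient field for $R$. It is resolved by first quotienting out $p$ (absorbable because $p$ is a $\ZZ$-scalar that trivially commutes past $\otimes_\ZZ$) and $\n^t$ (absorbable via \Cref{lemma powers moving around in tensor}), reducing the problem to an Artinian equicharacteristic local ring where Cohen's theorem supplies the needed coefficient field.
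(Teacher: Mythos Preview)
Your proof is correct and follows essentially the same strategy as the paper: reduce to the Artinian characteristic-$p$ quotient $R'=R/(p,\n^t)$, use that it admits a coefficient field containing the image of $\ZZ$, and obtain a left inverse to $\overline{d}$. The only difference is packaging: you first establish the identification $R/\m\otimes_R P^{t-1}_{R|\ZZ}\cong R'/\m'\otimes_{R'}P^{t-1}_{R'|\ZZ}$ and then invoke \Cref{lemma injective} as a black box, whereas the paper rederives the left inverse explicitly in this setting (working with $T=R/(p)$ and constructing the map $\varepsilon$ via the coefficient field of $T/\n^t$, which is your $R'$). Your reduction uses the same two ingredients the paper uses---\Cref{lemma powers moving around in tensor} for the $\n^t$-piece and the observation that $p$ passes freely across $\otimes_\ZZ$ for the $p$-piece---so the arguments are equivalent, with yours being slightly more economical by avoiding repetition of the content of \Cref{lemma injective}.
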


\begin{proof}
To simplify notation, set $P=P^{t-1}_{R|\ZZ}$ and $T=R/(p)$. Let $\overline{\Delta}$ denote the image of the diagonal $\Delta = \ker(R \otimes_\ZZ R \to R)$ in $T/\n \otimes_\ZZ T/\n^t$.
First, we claim that
\[
P/\m P \cong (T/\n \otimes_\ZZ T/\n^t)/\overline{\Delta}^t.
\] 
Indeed,
$$\begin{aligned}
\frac{P}{\m P} & \cong \frac{T \otimes_A T}{\Delta^t} \Bigg/  \frac{\n \otimes T}{\Delta^t} \vspace{0.5em} \\
& \cong \frac{T \otimes_A T}{\n \otimes T + {\Delta}^t} \\
& = \frac{T \otimes_A T}{\n \otimes T + T \otimes \n^t + {\Delta}^t} \vspace{1.5em} & \quad \textrm{since } \n \otimes T + \Delta^t \supseteq T \otimes \n^t \textrm{ by \Cref{lemma powers moving around in tensor}} \\
& \cong \left(\frac{T}{\n} \otimes \frac{T}{\n^t}\right) \!\bigg/ \overline{\Delta}^t.
\end{aligned}$$

Since $T/\n^t$ is Artinian of characteristic $p$, it has a coefficient field $T_0 \subseteq T/\n^t$. 
Consider the composition $\varepsilon$
\[
\xymatrix{
T/\n \otimes_\ZZ T/\n^t \ar@/_2.0pc/[rrr]_-{\varepsilon} \ar[r]^-{\cong} & T_0 \otimes_\ZZ T/\n^t \, \ar[r] & T/\n^t \otimes_\ZZ T/\n^t \ar[r]^-{\mu} & T/\n^t.
}
\]
We now claim that $\varepsilon(\overline{\Delta}) \subseteq \n (T/\n^t)$. Since $\overline{\Delta} \subseteq T/\n \otimes_\ZZ T/\n^t$ is the ideal generated by the residue classes $\overline{r \otimes 1 - 1 \otimes r}$ of $r \in T/\n^t$ inside $T/\n \otimes_\ZZ T/\n^t$, it suffices to show that 
\[ \varepsilon(\overline{r\otimes 1 -1\otimes r}) \in \n(T/\n^t).\] 
Recall that any $r \in T/\n^t$ can be written as $r=t_0+\eta$ for some $t_0 \in T_0$ and $\eta \in \n (T/\n^t)$. Thus in $T/\n \otimes_\ZZ T/\n^t$ we have
\[ 
\overline{r \otimes 1 - 1 \otimes r}  = \overline{t_0 \otimes 1 - 1 \otimes t_0 - 1 \otimes \eta}. \] 
Moreover, $t_0 \otimes 1 - 1 \otimes t_0$ is killed by the multiplication map when restricted to $T_0 \otimes_\ZZ T_0$, and hence is in the kernel of $\varepsilon$. Thus
$$\begin{aligned}
\varepsilon(\overline{r \otimes 1 - 1 \otimes r} ) & = \varepsilon(\overline{t_0 \otimes_1 - 1 \otimes t_0 - 1 \otimes \eta}) \\
    & = \varepsilon(t_0 \otimes 1 - 1 \otimes t_0) - \varepsilon(1 \otimes \eta) \\
    & = -\varepsilon(1 \otimes \eta) \\
    & = -\eta \in \n(T/\n^t).
\end{aligned}$$
This proves the claim.
Note that the fact that $\varepsilon(1 \otimes \eta) = \eta$ follows from the definition of $\varepsilon$.

Now that we have proved the claim, since $\varepsilon$ is a ring homomorphism, we have
\[ \varepsilon(\overline{\Delta}^t) \subseteq [\n(T/\n^t)]^t = 0.\]
In particular, we have an induced map 
\[
\widetilde{\varepsilon}: P/\m P \cong (T/\n \otimes_\ZZ T/\n^t)/\overline{\Delta} \longrightarrow T/\n^t = R/(p,\n^t),
\]
and $\widetilde{\varepsilon} \circ \overline{d}(r) = \widetilde{\varepsilon}(\overline{1\otimes r})= r$. This concludes the proof that $\overline{d}$ is injective.
\end{proof}

\begin{lemma} \label{lemma map}
Let $(V, pV, K)$ be a complete discrete valuation ring with uniformizer $p>0$, and let $Q$ be a prime ideal of $ S=V[x_1,\ldots,x_n]$ that contains $p$. Let $t>0$ and $x \in P^{t-1}_{S|\ZZ}$. If $x \notin QP^{t-1}_{S|\ZZ}$, then there exists an $S$-linear map $\gamma\!:P^{t-1}_{S|\ZZ} \to S$ such that $\gamma(x) \notin Q$.
\end{lemma}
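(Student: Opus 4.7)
My approach is to reduce the problem modulo $p$ to exploit the freeness of the resulting module of principal parts, and then lift the resulting linear functional back to $S$. The key identification is that
\[
P^{t-1}_{S|\ZZ}/pP^{t-1}_{S|\ZZ} \;\cong\; P^{t-1}_{\bar S\,|\,\FF_p},
\]
where $\bar S = S/pS = K[x_1,\ldots,x_n]$; this follows from the fact that $V\otimes_{\ZZ} V$ modulo $p$ coincides with $K\otimes_{\FF_p} K$. By \Cref{module of principal parts direct sum}, this quotient is a free $\bar S$-module. Since $p\in Q$, the hypothesis $x\notin QP^{t-1}_{S|\ZZ}$ forces the image $\bar x$ to lie outside $\bar Q\cdot P^{t-1}_{\bar S\,|\,\FF_p}$, where $\bar Q=Q/pS$. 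Freeness over $\bar S$ then yields an $\bar S$-linear projection $\bar\gamma:P^{t-1}_{\bar S\,|\,\FF_p}\to\bar S$ with $\bar\gamma(\bar x)\notin\bar Q$.

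Next I would lift $\bar\gamma$ to an $S$-linear map $\gamma:P^{t-1}_{S|\ZZ}\to S$. Using the direct sum decomposition
\[
P^{t-1}_{S|\ZZ} \;\cong\; \bigoplus_{\substack{0\leq i\leq t-1\\ |\alpha|=t-1-i}} \left(P^i_{V|\ZZ}\otimes_V S\right)z^\alpha
\]
from \Cref{module of principal parts direct sum}, which is compatible with $S$-linear projections onto summands, it is enough to produce an $S$-linear map on a single summand $(P^{i_0}_{V|\ZZ}\otimes_V S)z^{\alpha_0}$---the one corresponding to a basis element on which $\bar\gamma$ is nonzero---and extend by zero on the remaining summands. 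By adjunction, an $S$-linear map $P^{i_0}_{V|\ZZ}\otimes_V S\to S$ is the same as a $V$-linear map $P^{i_0}_{V|\ZZ}\to S$, and hence a $\ZZ$-linear differential operator $V\to S$ of order at most $i_0$. The lifting problem thus reduces to constructing such a differential operator whose reduction modulo $p$ realizes the prescribed $\FF_p$-linear functional $P^{i_0}_{K\,|\,\FF_p}\to K$ extracted from $\bar\gamma$.

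I expect the lifting step to be the main obstacle. In the field case of \Cref{thm general field}, freeness of $P^{n-1}_{R|\ZZ}$ over $R$ makes the analogous step essentially trivial; here, by contrast, $P^{i_0}_{V|\ZZ}$ is typically not finitely generated over $V$, so one cannot invoke Nakayama's lemma directly. The argument must instead exploit the complete discrete valuation ring structure of $V$, via a coefficient-ring analysis in the spirit of the proof of \Cref{d injective}, to construct the required differential operators summand by summand. Once $\gamma$ is produced, the congruence $\gamma(x)\equiv\bar\gamma(\bar x)\pmod{p}$ forces $\gamma(x)\notin Q$, concluding the argument.
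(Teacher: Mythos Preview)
Your overall structure matches the paper's: both reduce via \Cref{module of principal parts direct sum} to a single summand $M\otimes_V S$ with $M=P^{i_0}_{V|\ZZ}$, and both recognize that the crux is producing a suitable $V$-linear functional on $M$. Your identification $P^{t-1}_{S|\ZZ}/p\cong P^{t-1}_{\bar S|\FF_p}$ and the reduction to one summand are correct.

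The gap is that you stop exactly at the substantive content of the lemma. Lifting a $K$-linear functional $M/pM\to K$ to a $V$-linear map $M\to V$ is not automatic: since $M$ is not finitely generated, the natural map $\Hom_V(M,V)\otimes_V K\to\Hom_K(M/pM,K)$ has no reason to be surjective, and a ``coefficient-ring analysis in the spirit of \Cref{d injective}'' does not help---that lemma lives entirely in characteristic $p$ and says nothing about lifting across $p$. The paper's mechanism is different. Writing the relevant component as $x_{\lambda_0}=\sum_j\alpha_j\otimes s_j$ with $\alpha_1,\dots,\alpha_r$ linearly independent in $M/pM$, it shows that $\psi\colon V^r\to M$, $e_j\mapsto\alpha_j$, is a \emph{split} injection. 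Two genuinely new ingredients are used: first, $M/p^jM\cong P^{i_0}_{V/p^j\,|\,\ZZ/p^j}$ is free for every $j\geqslant 1$ because $\ZZ/p^j\to V/p^j$ is formally smooth, and an induction on $j$ then shows $\psi\otimes V/p^j$ is injective for all $j$; second, tensoring with the injective hull $E=\varinjlim V/p^j$ and applying Matlis duality---using completeness of $V$ so that $\Hom_V(E,E)\cong V$---converts this family of injections into surjectivity of $\Hom_V(M,V^r)\to\Hom_V(V^r,V^r)$, i.e., $\psi$ splits. The resulting projection $M\to V$ with $\alpha_1\mapsto 1$ and $\alpha_2,\dots,\alpha_r\mapsto 0$ is precisely the lift you were looking for; without the formal-smoothness and Matlis-duality steps, the lifting cannot be carried out.
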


\begin{proof}
By \Cref{module of principal parts direct sum} and \cite[Proposition 2.16]{DiffSig}, 
$$P^{t-1}_{S|\ZZ} \cong  \bigoplus_{\lambda \in \Lambda} \left( P^i_{V|\ZZ} \otimes_V S \right),$$
where $\Lambda = \{\lambda \in \NN^m \mid |\lambda|+i=t-1\}$.
Since $x = (x_\lambda)_{\lambda \in \Lambda} \notin Q P^{t-1}_{S|\ZZ}$, there exists a component $x_{\lambda_0} \in P_0 = P^{i_0}_{V|\ZZ} \otimes_V S$ of the direct sum such that $x_{\lambda_0} \notin Q P_0$. Let $M = P^{i_0}_{V|\ZZ}$, and write 
\[ x_{\lambda_0} = \sum_{j=1}^u \alpha_j \otimes s_j \qquad \text{for some } \alpha_j \in M \text{ and } s_j \in S. \] 
Without loss of generality, after possibly rearranging and rewriting the sum, we may assume that $\alpha_1,\ldots,\alpha_r$ are linearly independent mod $p M$, and that $\alpha_{r+1},\ldots,\alpha_u \in pM$. Moreover, our running assumptions guarantee that there exists $a=1,\ldots,r$ such that $s_a \notin Q$. Without loss of generality we may assume $s_1 \notin Q$.

Consider the map \vspace{-0.5em}
$$\begin{aligned} 
\psi\!: V^r & \longrightarrow M \\
e_i & \longmapsto \alpha_i .
\end{aligned}$$
We claim that $\psi \otimes \text{id} \!: V^r \otimes_V V/(p^j) \longrightarrow M \otimes_V V/(p^j)$ is injective for all $j>0$. We will use induction on $j$. The case $j=1$ follows from the discussion above. For the inductive step,
since $V$ is flat over $\ZZ$, by \cite[proof of Lemma 2.1]{BBLSZ1} we have $$M/(p^j) M \cong M \otimes_{\ZZ} \ZZ/(p^j) \cong P^{i_0}_{V/(p^j) |\ZZ/(p^j)}.$$
By \cite[Tag~013L]{stacks-project}, the map $\ZZ/(p^j) \to V/(p^j)$ is formally smooth. By \cite[16.10.2]{EGA-IV}, the $V/(p^j)$-module $P^{i_0}_{V/(p^j)|\ZZ/(p^j)}$ is projective and thus free, since $V/(p^j)$ is local. Thus, given 
\[ \sum_i v_i \alpha_i\in p^j M \subseteq p M,\] 
we can write $v_i = pv'_i$ for each $i$, and applying freeness, 
\[ \sum_i v'_i\alpha_i\in p^{j-1}M.\] 
By the induction hypothesis, we must have $v'_i\in p^{j-1}V$, so $v_i\in p^jV$, completing the proof that $\psi \otimes_V V/(p^j)$ is injective for all $j$.

We claim that $\psi$ is a split injection, so $V^r$ is a direct summand of $M$. To show this, let 
$$E \colonequals E_V(K)\cong V_{p}/V\cong \varinjlim V/p^j V$$ 
be the injective hull of the residue field~$K$ of $V$. The map $\psi\otimes_V E$ must be injective. Thus applying $\Hom_V(-,E^r)$ to the exact sequence
\[\xymatrix{0 \ar[r] & V^r \otimes_V E \ar[r]^-{\psi} & M \otimes_V E}\]
yields the exact sequence
\[\xymatrix{\Hom_V(M \otimes_V E, E^r) \ar[r]^-{\varphi} & \Hom_V(V^r \otimes_V E, E^r) \ar[r] & 0}\]
where $\varphi$ is given by precomposition with $\psi \otimes \text{id}$. Moreover, we have natural isomorphisms
\[\begin{aligned}
\Hom_V( - \otimes_V E, E^r) & \cong \Hom_V (-, \Hom_V(E,E^r)) & \text{by Hom-tensor adjunction}\\
& \cong  \Hom_V (-, \Hom_V(E,E)^r) \\
& \cong \Hom_V( -, \widehat{V}^r) \\
& = \Hom_V( -, V^r) & \text{since $V$ is complete}.
\end{aligned}\]
Therefore, we get an exact sequence
\[\xymatrix{\Hom_V(M,V^r) \ar[r]^-{\tilde\varphi} & \Hom_V(V^r,V^r) \ar[r] & 0}\]
where $\tilde\varphi$ is given by precomposition with $\psi$. We conclude that $\psi$ is a split injection. 

Thus there exists a $V$-linear map $\beta\!:M \to V$ such that $\beta(\alpha_1) = 1$ and 
\[\beta(\alpha_2) = \cdots = \beta(\alpha_r)=0.\]
By base change, this induces an $S$-linear map 
\[\gamma\!:P_0 = M \otimes_V S \to S\]
such that $\gamma(\alpha_1 \otimes s_1) = s_1$, while $\gamma(\alpha_a \otimes s_a) = 0$ for all $a=2,\ldots,r$. On the other hand, for all $a=r+1,\ldots,u$ we have $\alpha_a \in pM$, and since $p \in Q$, we must have
\[ \alpha_a \otimes s_a \in Q P_0.\] 
Therefore, for all $a=r+1,\ldots,u$ we have that $\gamma(\alpha_a \otimes s_a) \in Q$. It follows that 
\[\gamma(x_{\lambda_0}) \equiv s_1 \bmod Q,\] 
and since $s_1 \notin Q$, this shows that $\gamma(x_{\lambda_0}) \notin Q$. Extending by zero to the remaining components of $P^{t-1}_{S | \ZZ}$, this gives an $S$-linear map sending $x$ outside of $Q$.
\end{proof}

\begin{corollary} \label{existence diff operator} Let $V$ be a complete discrete valuation ring with uniformizer $p>0$, and let $Q$ be a prime ideal in $R=V[x_1,\ldots,x_n]$ that contains $p$. Let $t \geqslant 1$, and let $a \in R$ be an element such that its image in $R_Q$ does not belong to $(p,\n^t)R_Q$. Then there exists a differential operator $\partial \in D^{t-1}_{R|\ZZ}$ such that $\partial(a) \notin Q$.
\end{corollary}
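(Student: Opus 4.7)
My plan is to chain together the two preceding lemmas via the universal differential operator $d\!: R \to P^{t-1}_{R|\ZZ}$. The strategy has three steps: first show that $d(a) \notin QP^{t-1}_{R|\ZZ}$; then invoke \Cref{lemma map} to produce an $R$-linear map $\gamma\!: P^{t-1}_{R|\ZZ} \to R$ with $\gamma(d(a)) \notin Q$; and finally set $\partial \colonequals \gamma \circ d$. By the universal property of $d$, this $\partial$ is a $\ZZ$-linear differential operator of order at most $t-1$ on $R$, and $\partial(a) = \gamma(d(a)) \notin Q$ by construction.

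The one point that requires argument is the nonvanishing $d(a) \notin QP^{t-1}_{R|\ZZ}$, which I would verify by passing to the localization at $Q$. Using the isomorphism $(P^{t-1}_{R|\ZZ})_Q \cong P^{t-1}_{R_Q|\ZZ}$ from \cite[Proposition 2.16]{DiffSig}, a hypothetical containment $d(a) \in QP^{t-1}_{R|\ZZ}$ would force the image $d(a)/1$ to lie in $QR_Q \cdot P^{t-1}_{R_Q|\ZZ}$, so it suffices to rule this out. I would apply \Cref{d injective} to the local ring $R_Q$ with maximal ideal $\m = QR_Q$: by hypothesis the image of $a$ in $R_Q$ does not lie in $(p,\n^t)R_Q$, so its class in $R_Q/(p,\n^t)R_Q$ is nonzero, and the injectivity asserted by that lemma yields $\overline{d}(a/1) \neq 0$ in $P^{t-1}_{R_Q|\ZZ}/\m P^{t-1}_{R_Q|\ZZ}$. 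This is exactly the desired nonvanishing of $d(a)/1$ modulo $QR_Q \cdot P^{t-1}_{R_Q|\ZZ}$.

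I do not anticipate any real obstacle, since the difficult content has been distilled into \Cref{d injective} (which provides the nonvanishing modulo the maximal ideal of the localization) and \Cref{lemma map} (which locates a free $R$-summand containing $d(a)$ and splits off a functional detecting $d(a)$ outside $Q$). The corollary is the straightforward synthesis of these two facts; the only bookkeeping is the compatibility between $d$ over $R$ and $d$ over $R_Q$, which is immediate from the cited localization formula for modules of principal parts.
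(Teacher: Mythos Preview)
Your proposal is correct and matches the paper's proof essentially line for line: localize at $Q$ and use \Cref{d injective} together with the isomorphism $(P^{t-1}_{R|\ZZ})_Q \cong P^{t-1}_{R_Q|\ZZ}$ to get $d(a)\notin QP^{t-1}_{R|\ZZ}$, then apply \Cref{lemma map} to produce $\gamma$ and set $\partial=\gamma\circ d$. There is nothing to add.
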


\begin{proof}
Since the image of $a$ in $R_Q$ does not belong to $(p,\n^t)R_Q$, by \Cref{d injective} the image of $d(a)$ in $(P^{t-1}_{R|\ZZ})_Q \cong P^{t-1}_{R_Q|\ZZ}$ does not belong to $Q P^{t-1}_{R_Q|\ZZ}$. Therefore, $d(a) \notin QP^{t-1}_{R|\ZZ}$. By \Cref{lemma map}, we can then find an $R$-linear map $\varphi\!: P^{t-1}_{R|\ZZ} \to R$ such that $\varphi(d(a)) \notin Q$. Then $\partial = \varphi \circ d \in D^{t-1}_{R|\ZZ}$ is the desired differential operator.
\end{proof}

\begin{theorem} \label{main}
Let $V$ be a complete discrete valuation ring with uniformizer $p>0$, and $R=V[x_1,\ldots,x_n]$. Let $\delta$ be a $p$-derivation on $R$. If $Q$ is a prime ideal of $R$ that contains $p$, then $Q^{(n)} = Q\difM{n}$.
\end{theorem}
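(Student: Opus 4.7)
My plan is to prove the inclusion $Q\difM{n} \subseteq Q^{(n)}$, since the reverse $Q^{(n)} \subseteq Q\difM{n}$ is already \cite[Proposition 3.19]{ZNmixed}. Arguing contrapositively, given $f \in R$ with $f \notin Q^{(n)}$, I seek integers $a, b \geq 0$ with $a + b \leq n-1$ and a differential operator $\partial \in D^b_{R|\ZZ}$ such that $\delta^a(\partial(f)) \notin Q$. Since $Q^{(n)} = \m^n R_Q \cap R$ with $\m = QR_Q$, the task localizes: I need only show that the image of $f$ in $R_Q$ fails this condition. As in \Cref{d injective}, I fix a minimal generating set $p, f_1, \ldots, f_v$ of $\m$ and set $\n = (f_1, \ldots, f_v)$.

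The first reduction isolates where $f$ fails $\m^n$: using the Cohen structure $\hat R_Q \cong V'[[f_1, \ldots, f_v]]$ (with $V' = \widehat{R_Q/\n}$), a power series $\sum_\alpha a_\alpha f^\alpha$ lies in $\m^n$ precisely when $v_p(a_\alpha) + |\alpha| \geq n$ for all $\alpha$, which yields
\[
\m^n R_Q = \bigcap_{\substack{s + t = n + 1 \\ s, t \geq 1}} \big[(p^s) + \n^t\big] R_Q.
\]
Thus $f \notin \m^n$ gives integers $s, t \geq 1$ with $s + t = n+1$ and the image of $f$ not in $(p^s, \n^t) R_Q$. The target is then to find $c \leq s-1$ and $\partial \in D^{t-1}_{R|\ZZ}$ with $\delta^c(\partial(f)) \notin Q$, which yields $a = c$, $b = t-1$, $a + b \leq n-1$.

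I would prove the existence of such $(c, \partial)$ by induction on $s$. The base $s = 1$ is exactly \Cref{existence diff operator}, with $c = 0$. For $s \geq 2$, either $f \notin (p^{s-1}, \n^t) R_Q$ (apply the inductive hypothesis), or $f = p^{s-1} r + n$ in $R_Q$ with $n \in \n^t R_Q$ and $r \notin (p, \n^t) R_Q$ (using that $p$ is a nonzerodivisor modulo $\n^t$). In the latter case, clearing denominators by some $V \in R \setminus Q$ yields $Vf = p^{s-1} g + N$ in $R$ with $g, N \in R$, $N \in \n^t R$, and $g \notin (p, \n^t) R_Q$; \Cref{existence diff operator} then produces $\partial \in D^{t-1}_{R|\ZZ}$ with $\partial(g) \notin Q$. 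The $\ZZ$-linearity of $\partial$ (since $p^{s-1} \in \ZZ$) gives $\partial(Vf) = p^{s-1} \partial(g) + \partial(N)$, and by \Cref{lemma containment diff}, $\partial(N) \in \n \subseteq Q$.

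The main obstacle is the final step: concluding $\delta^{s-1}(\partial(f)) \notin Q$ from the above. The crucial input is that any $p$-derivation on $R$ satisfies $\delta(p) = 1 - p^{p-1}$ (since $\phi(p) = p$ holds for every ring endomorphism $\phi$ of $R$, as $p \in \ZZ$), and this is a unit in $R_Q$. A direct induction then yields $\delta^{s-1}(p^{s-1} h) \equiv u \cdot h^{p^{s-1}} \pmod{pR}$ for some unit $u \in R_Q$, so $\delta^{s-1}(p^{s-1} \partial(g)) \notin Q$. What remains delicate is to (i) propagate this nonvanishing through the iterated application of the $\delta$-additivity formula $\delta(x+y) = \delta(x) + \delta(y) + p^{-1}(x^p + y^p - (x+y)^p)$ to $p^{s-1}\partial(g) + \partial(N)$, controlling cross-terms that involve $\partial(N) \in Q$ and its $\delta$-transforms; and (ii) pass from $\delta^{s-1}(\partial(Vf))$ back to $\delta^{s-1}(\partial(f))$ using the commutator $[\partial, V] \in D^{t-2}_{R|\ZZ}$ together with $V$ being a unit in $R_Q$. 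These compatibility issues are the heart of the proof and the main technical hurdle.
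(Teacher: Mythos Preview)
Your proposal has a genuine gap, which you correctly identify as the ``main obstacle.'' The difficulty is structural: in your decomposition $Vf = p^{s-1}g + N$ with $N/1 \in \n^t$, the error term $\partial(N)$ lies in $Q$ but need not lie in $Q\derp{s}$, so iterating $\delta$ on it need not stay in $Q$ and the cross-terms in (i) are genuinely uncontrolled. (Issue (ii) is less serious --- replacing $\partial$ by $\partial \circ \mu_V \in D^{t-1}_{R|\ZZ}$ handles it --- but (i) remains.)

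The paper avoids this by a different decomposition and a different organizing principle. Rather than your filtration $\m^n = \bigcap[(p^s)+\n^t]$, it argues by contradiction: pick $f \in Q\difM{n} \setminus Q^{(n)}$ of \emph{maximal $p$-adic order} $s$, and write $f = p^s g + p^{s+1}h$ in $R$ with $g \notin (p)$ --- no denominators to clear. Maximality is then used to force $g/1 \notin (p,\n^t)R_Q$ for $t = n-s$ (else one manufactures an element of $Q\difM{n}\setminus Q^{(n)}$ of higher $p$-adic order), so \Cref{existence diff operator} yields $\partial \in D^{t-1}_{R|\ZZ}$ with $\partial(g) \notin Q$. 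Now comes the step that replaces your iterated-$\delta$ computation: from $f \in Q\difM{n}$ one gets $\partial(f) \in Q\derp{s+1}$, and the error term $p^{s+1}\partial(h)$ lies in $(p^{s+1}) \subseteq Q\derp{s+1}$ --- this is exactly where decomposing by $p$-adic order, rather than via $\n^t$, pays off. Hence $p^s\partial(g) \in Q\derp{s+1}$; but $p^s \notin Q\derp{s+1}$ and $Q\derp{s+1}$ is $Q$-primary, forcing $\partial(g) \in Q$, a contradiction. The two ideas you are missing are thus: decompose $f$ by $p$-adic order so the error term automatically sits in the correct $Q\derp{\cdot}$, and invoke the $Q$-primariness of $Q\derp{\cdot}$ in place of any direct computation of $\delta^{s-1}(\cdots)$.
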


\begin{proof}
Recall that $Q\difM{n}$ is a $Q$-primary ideal containing $Q^n$, and thus containing $Q^{(n)}$. Assume by way of contradiction that $Q\difM{n} \not\subseteq Q^{(n)}$. Choose $f \in Q\difM{n} \smallsetminus Q^{(n)}$ with highest $p$-adic order, say of $p$-adic order $s$. Note that necessarily $s<n$, since $p^n \in Q^{(n)}$. We can write $f=p^sg+p^{s+1}h$ for some $g \notin (p)$. First, we claim that $p^sg \notin Q^{(n)}$. In fact, if $p^sg \in Q^{(n)}$, then necessarily $p^{s+1}h \notin Q^{(n)}$. However, $p^{s+1}h = f-p^sg \in Q\difM{n}$, so that $p^{s+1}h$ would be an element of $Q\difM{n} \smallsetminus Q^{(n)}$ of higher $p$-adic order than $f$, which contradicts our choice of~$f$.

Now let $S=R_Q$, and $\m=Q R_Q$ be its maximal ideal. Let $f_1,\ldots,f_v,\frac{p}{1}$ be a minimal generating set of $\m$, and let $\n = (f_1,\ldots,f_v)$. Set $t=n-s$. We claim that ${\frac{g}{1} \notin (\frac{p}{1},\n^t)}$. Otherwise, there would exist ${\alpha \notin Q}$ such that ${\alpha g \in (p)+Q^t}$, and thus 
\[ {\alpha p^sg \in (p^{s+1})+Q^{(n)}}.\] 
But then for some $u \in R$ we have
\[\alpha f = \alpha p^s g + \alpha p^{s+1}h = p^{s+1}u \quad \text{ modulo } Q^{(n)} .\] 
Note that ${\alpha f \in Q\difM{n} \smallsetminus Q^{(n)}}$ since $f \notin Q^{(n)}$, ${\alpha \notin Q}$, and $Q^{(n)}$ is $Q$-primary. But since $Q^{(n)} \subseteq Q\difM{n}$, it would then follow that $p^{s+1}u \in Q\difM{n} \smallsetminus Q^{(n)}$ as well, contradicting once again the original choice of $f$. This proves that ${\frac{g}{1} \notin (\frac{p}{1},\n^t)}$, as claimed.

Thus, we have 
\[ f=p^sg + p^{s+1}h \in Q\difM{n} \smallsetminus Q^{(n)} \] 
of highest possible $p$-adic order $s<n$, with $p^sg \notin Q^{(n)}$ and $\frac{g}{1} \notin (\frac{p}{1},\n^t)$ for $t=n-s$. By \Cref{existence diff operator}, we can find $\partial \in D^{t-1}_{S|\ZZ}$ such that $\partial(g) \notin Q$. Since $f \in Q\difM{n}$, then $\partial(f) \in Q\derp{s}$. Given that $p^{s+1} \in Q\derp{s}$, and that $Q\derp{s}$ is an ideal, we conclude that 
\[p^s \partial(g) = \partial(f) - p^{s+1}\partial(h) \in Q\derp{s}.\] 
However, $p^s \notin Q\derp{s}$, and since $Q\derp{s}$ is $Q$-primary, we therefore conclude that $\partial(g) \in Q$, contradicting our choice of $\partial$. Therefore we must have $f \notin Q\difM{n}$, as desired.
\end{proof}

\begin{remark}
    \Cref{main} easily extends to any radical ideal along the lines of \Cref{redu1} and \Cref{redu2}.
\end{remark}

\begin{example}\label{example Yairon mixed char}
One might ask whether, for a singular ring $R$ of mixed characteristic $(0,p)$ with a $p$-derivation $\delta$, one can characterize symbolic powers of $Q\ni p$ via compositions of $\delta$ and differential operators $D_{R|\ZZ}(R,R/Q)$, $D_{R|\ZZ}(R,R)$, and $D_{(R/Q)|\ZZ}(R/Q,R/Q)$ of the appropriate orders, in analogy with \cite{Yairon} and \Cref{thm Yairon}. However, this is not possible.

Take $R=\ZZ_{(p)}[x,y]/(xy)$, and $Q=(p,x,y)$. Note that $R$ admits a $p$-derivation $\delta$. For instance, the $p$-derivation on $\ZZ_{(p)}[x,y]$ mapping $x$ and $y$ to $0$ descends to $R$. We claim that every composition $\alpha \delta^b \gamma$ with 
\[\alpha\in D^a_{R|\ZZ}(R,R/Q), \quad \gamma\in D^c_{R|\ZZ}(R,R), \quad \text{and} \quad  a+b+c<3\] 
maps $px$ to an element of $Q$. Since $px\notin Q^{(3)}$ and $D_{(R/Q)|\ZZ}(R/Q,R/Q)$ is trivial, this will show our claim that symbolic powers cannot be characterized as described above. 

Note first that $(x,y)$ is a {$D_{R|\ZZ}(R,R)$-stable} ideal, meaning that $D_{R|\ZZ}(R,R) (x,y) \subseteq (x,y)$: indeed, both ideals $(x)$ and $(y)$ are {$D_{R|\ZZ}(R,R)$-stable} since they are minimal components of the zero ideal, which is {$D_{R|\ZZ}(R,R)$-stable}, and thus so is their sum $(x,y)$. If $c>0$, then $\gamma(px) = p \gamma(x) \in pQ\subseteq Q^2$, and $a+b<2$, so $b \leqslant 1$ and thus 
\[{\alpha \delta^b \gamma(px)\in Q}.\] 
If $b=0$, the claim is clear by $\ZZ$-linearity of elements of $D_{R|\ZZ}(R,R/Q)$. Otherwise, note that 
\[{\delta(px) = \Phi(p) \delta(x) + x^p \delta(p) = x^p \delta(p)}.\] 
Then, in the case $b=2$ we have 
\[\delta(\delta(px)) = \Phi(\delta(p)) \delta(x^p) + x^{p^2} \delta(\delta(p)) \in (x^{p^2}) \subseteq Q. \]
In the case $a=b=1$, for any $\alpha\in D^1_{R|\ZZ}(R,R/Q)$ we have 
\[\alpha(\delta(px))={\alpha(x^p \delta(p)) \in (x^{p-1}) \subseteq Q}.\] 
This shows the claim.
\end{example}

\section{Unramified discrete valuations rings with no $p$-derivations}
The following example is based on examples of discrete valuation rings of equal characteristic with no nontrivial derivations due to Rotthaus \cite[p.~319]{Rotthaus} and Mukhopadhyay and Smith \cite{MukhopadhyaySmith}. We adapt their arguments to show that there exists an unramified discrete valuation ring which has no $p$-derivations. 

Let $A=\ZZ_{(2)}[x]/(x^2+x+1)$, which is an unramified discrete valuation ring with uniformizer~$2$. Let $\widehat{(-)}$ denote $2$-adic completion, and take $V=\widehat{A}$. Let $L$ be the fraction field of~$A$.  Note that both $L$ and $V$ live inside the fraction field $K$ of $V$. Given an element $w \in K$ we let $R_w = L(w) \cap V$. We note that $R_w$ is an unramified discrete valuation ring with uniformizer~$2$, since it is the valuation ring of the restriction of the $2$-adic valuation on~$V$ from~$K$ to $L(w)$.

\begin{theorem}\label{bad dvr} 
With the notation introduced above, there exists $w \in K$ such that $R_w$ is an unramified discrete valuation ring which has no $2$-derivations.
\end{theorem}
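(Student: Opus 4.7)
The plan is three-fold: first reduce the existence of $2$-derivations on $R_w$ to a purely field-theoretic condition on $w$, then use the uniqueness of the Frobenius lift on $V$ to make that condition explicit, and finally construct a suitable $w \in V$ by $2$-adic approximation.

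\textbf{Step 1 (reformulation).} A $2$-derivation $\delta$ on any $2$-torsion-free ring $S$ is equivalent data to a ring endomorphism $\Phi\!:S\to S$ lifting Frobenius modulo $2$, via $\Phi(s)=s^2+2\delta(s)$. Such $\Phi$ preserves $(2)$ and is therefore continuous in the $2$-adic topology, so it extends to the $2$-adic completion. Since $A\subseteq R_w$ is dense in $V$, we have $\widehat{R_w}=V$, and any $2$-derivation on $R_w$ extends to one on $V$. But $V=W(\FF_4)$ admits a \emph{unique} Frobenius lift $\Phi$: the generator $x$ of $V$ over $\ZZ_2$ is a primitive cube root of unity, and $\Phi(x)$ must be a cube root of unity congruent to $x^2$ modulo $2$, which forces $\Phi(x)=x^2$. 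Consequently, $R_w$ admits a $2$-derivation if and only if $\Phi(R_w)\subseteq R_w$; since $\Phi$ restricts to the nontrivial automorphism of $L/\QQ$, this is equivalent to the purely field-theoretic condition $\Phi(w)\in L(w)$.

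\textbf{Step 2 (existence of a bad $w$).} It remains to produce $w\in V$ (without loss of generality, after clearing a power of $2$) with $\Phi(w)\notin L(w)$. Equivalently, for every pair of polynomials $Q,R\in A[T]$ with $Q\ne 0$, one wants $Q(w)\Phi(w)\ne R(w)$. Since there are only countably many such pairs $(Q_n,R_n)$, the plan is to construct $w$ inductively as a $2$-adic limit $w=\lim_n w_n$ with $w_n\in A$ and $w_n-w_{n-1}\in 2^{N_n}A$ for a rapidly increasing sequence $N_n$: at stage $n$ one uses the freedom to perturb the approximant to force the $n$th obstruction $Q_n(w)\Phi(w)-R_n(w)$ to have a prescribed, bounded $2$-adic valuation, without disturbing the obstructions already handled for $j<n$.

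\textbf{Step 3 (making the inductive step work).} The change in the $n$th obstruction under $w_{n-1}\mapsto w_{n-1}+2^{N_n}v$ is, to leading order in $2^{N_n}$, of the form $2^{N_n}\bigl[\alpha v+\beta \Phi(v)\bigr]+O(2^{2N_n})$ for explicit $\alpha,\beta\in V$ determined by $w_{n-1}$, $Q_n$ and $R_n$. The essential input is that whenever $(\alpha,\beta)\not\equiv(0,0)\pmod 2$, the induced map $v\mapsto \alpha v+\beta\Phi(v)$ is nonzero modulo $2$, because $\Phi$ restricts on the residue field $\FF_4$ to the nontrivial Frobenius automorphism $x\mapsto x^2$. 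This lets one choose $v\in\{1,x,x+1\}$ making the perturbation nonzero in the appropriate $2$-adic precision and so steering the obstruction to the desired valuation.

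\textbf{Main obstacle.} The substantive difficulty lies in Step 3: one must simultaneously track countably many $2$-adic valuations while choosing the successive perturbations, and must handle the degenerate cases $(\alpha,\beta)\equiv(0,0)\pmod 2$ by pushing the expansion to higher-order Taylor terms. The crucial nontrivial input throughout is the nontriviality of the Frobenius action on $\FF_4$, which in this mixed characteristic setting plays the role that the nontrivial action of $d/dx$ plays in the equal characteristic constructions of Rotthaus and of Mukhopadhyay--Smith.
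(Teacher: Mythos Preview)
Your Step~1 matches the paper: a $2$-derivation on $R_w$ is the same as a Frobenius lift, which extends by completion to the \emph{unique} Frobenius lift $\Phi$ on $V=\widehat{R_w}$, so the question reduces to whether $\Phi(w)\in L(w)$.

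Your Steps~2--3, however, take a harder road than the paper and are not complete as written. You flag the degenerate case $(\alpha,\beta)\equiv(0,0)\pmod 2$ as the ``main obstacle,'' and it is a real one: nothing prevents $Q_n(w_{n-1})$ and $Q_n'(w_{n-1})\Phi(w_{n-1})-R_n'(w_{n-1})$ from both lying in $2V$, and ``pushing to higher-order Taylor terms'' is not automatic because $\Phi$ is only $\ZZ_2$-linear, not $V$-analytic, in $w$. The scheme can be repaired---e.g.\ by writing $w=a+bx$ with $a,b\in\ZZ_2$, so that each obstruction $Q_n(w)\Phi(w)-R_n(w)$ becomes a genuine nonzero polynomial in the two variables $(a,b)$ and a Baire-type avoidance in $\ZZ_2^2$ goes through---but this is still more work than needed.

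The paper sidesteps the entire inductive construction with a transcendence-degree argument. Since $L$ is countable and $\ZZ_2$ is uncountable, choose $u,v\in\ZZ_2$ with $\operatorname{tr.deg}_L L(u,v)=2$, and set $w=u+xv$. Because $\Phi$ is the identity on $\ZZ_2$ and $\Phi(x)=x^2$, one has $\Phi(w)=u+x^2v$; the linear system $w=u+xv$, $\Phi(w)=u+x^2v$ over $L$ recovers $u$ and $v$, so $L(w,\Phi(w))=L(u,v)$ has transcendence degree~$2$. Hence $\Phi(w)\notin L(w)$ and $R_w$ has no $2$-derivation. This buys a two-line construction with no approximation and no case analysis; your approach, once completed, would give a more hands-on construction but at substantially greater cost.
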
 

\begin{proof} 
Let us for now consider any element $w \in K$, and let $\Phi$ be a lift of Frobenius on~$R_w$. Since $A \subseteq R_w \subseteq V$, then $\Phi$ induces a map $A \to V$ which, in turn, induces a map $V \to V$ that is still a lift of Frobenius. However, the only lift of Frobenius on $V$ is the one which is the identity on $\ZZ_2 \colonequals \widehat{\ZZ_{(2)}}$ and such that $\Phi(x)=x^2$. For a direct proof of this, note first that the knowledge of the restriction of $\Phi$ to $\ZZ_2$  and the value of $\Phi(x)$ is indeed sufficient to determine $\Phi$ on $V$. The unique lift of Frobenius on $\ZZ_2$ is the identity. If $\Phi(x) = x^2+2\delta(x)$, then we must have
\[(x^2+2\delta(x))^2+(x^2+2\delta(x)) + 1 = 0\]
which implies
\[\delta(x)\left(1+2x^2+2\delta(x)\right)=0.\]
Note that $1 \notin (2)$ and thus $1+2x^2+2\delta(x) \neq 0$, therefore we must have $\delta(x) = 0$.

Now we choose $w$ as follows: since $L$ is countable, and $\ZZ_2$ is uncountable, we can find $u,v \in \ZZ_2$ such that ${\rm tr.deg}_L(L(u,v)) = 2$, and we take $w=u+xv$. If $R_w$ had a $2$-derivation or, equivalently, a lift of Frobenius $\Phi$, then by what we proved above we would have 
\[
\Phi(u)=u, \quad \Phi(v)=v, \quad \Phi(x)=x^2 .\]
The condition that $\Phi(w) = u+x^2v \in R_w \subseteq L(w)$ gives 
\[ L(w) = L(u+xv,u+x^2v) = L(u,v).\] 
This contradicts our choice of $u$ and $v$, since $2={\rm tr.deg}_L(L(u,v))={\rm tr.deg}_L(L(w)) \leq 1$.
\end{proof}

\

\subsection*{Acknowledgments} The first author was partially supported by the MIUR Excellence Department Project CUP D33C23001110001, PRIN 2022 Project 2022K48YYP, and by INdAM-GNSAGA. The second author was partially supported by NSF grant DMS-2236983. The third author was partially supported by NSF grant DMS-2044833. The second and third authors thank SECIHTI/CONAHCYT, Mexico, for its support with grant CF-2023-G-33.

This material is based upon work supported by the National Science Foundation under Grant No. DMS-1928930 and by the Alfred P. Sloan Foundation under grant G-2021-16778, while the three authors were in residence at the Simons Laufer Mathematical Sciences Institute (formerly MSRI) in Berkeley, California, during the Spring 2024 semester.

\bibliographystyle{alpha}
\bibliography{References}

\end{document}